\def\u{{\bf u}}
\def\f{{\bf f}}
\def\a{\alpha}
\def\b{\beta}
\def\d{\delta}
\def\D{D}
\def\g{\gamma}
\def\G{\Gamma}
\def\k{\kappa}
\def\l{\lambda}
\def\r{\rho}
\def\s{\sigma}
\def\R{\mathbb R}
\def\N{\mathbb N}
\def\p{\partial}
\def\ep{\varepsilon}
\def\Sm{S_{-\infty}}
\def\Sp{S_\infty}
\numberwithin{equation}{section}
\newtheorem{thm}{Theorem}%[section]
\newtheorem{lem}[thm]{Lemma}
\newtheorem{prop}[thm]{Proposition}
\newcommand{\vect}[1]{\mathbf{#1}}
\begin{document}

\title{Traveling Wave Phenomena in a Kermack-McKendrick SIR model}
\author{Haiyan Wang\footnote{Email: Haiyan.Wang@asu.edu}\\
School of Mathematical and Natural Sciences\\ Arizona State University, Phoenix, AZ 85069-7100\\
Xiang-Sheng Wang\footnote{Email: xswang@semo.edu} \\
Department of Mathematics\\ Southeast Missouri State University, Cape Girardeau, MO 63701}
\date{}

\maketitle

\begin{abstract}
We study the existence and nonexistence of traveling waves of general diffusive Kermack-McKendrick SIR models with standard incidence where the total population is not constant.  The three classes,  susceptible $S$, infected $I$ and removed $R$, are all involved in the traveling wave solutions.
We show that the minimum speed for the existence of traveling waves for this three-dimensional non-monotonic system can be derived from its linearizaion at the initial disease-free equilibrium
%We show that the existence of traveling waves is determined by the basic reproduction number of the corresponding ordinary differential equations.
The proof in this paper is based on Schauder fixed point theorem and Laplace transform and provides a promising method to deal with high dimensional epidemic models.
\end{abstract}

{\bf Keywords:}
Traveling waves; SIR model; Schauder fixed point theorem; Laplace transform

{\bf AMS Subject Classification:} 92D30; 35K57; 34B40

\section{Kermack-McKendrick SIR model with standard incidence }

Compartmental epidemic models describing the transmission of communicable diseases have been extensively studied due to various outbreaks of infectious diseases and many applications to emerging fields such as socio-biological systems, social media. see. e.g. \cite{Brauer2012,Epstein}. To fully understand epidemic models has inspired many unsolved mathematical questions and re-energized novel mathematical research. The simple Kermack-McKendrik model \cite{Kermack1927} is the starting point for many epidemic models.  In a closed population,  which consists of susceptible individuals ($S(t)$), infected individuals ($I (t)$)  and removed individuals ($R(t)$), the simple deterministic susceptible-infected-removed (SIR) is
\begin{eqnarray}
  S'&=&-\tilde{\b} SI,\label{S-ODEN}\\
  I'&=&\tilde{\b} SI-\g I,\label{I-ODEN}\\
  R'&=&\g { I},\label{R-ODEN}
\end{eqnarray}
where $\tilde{\b}$ is the transmission coefficient, $\g$ is the recovery rate.  It is clear that total population $N=S+I+R$ is constant.

It is reasonable to assume that the population is closed and fixed  when  modeling  epidemics  where  the  disease  spreads  quickly in
the population and  dies  out within a short time.  However, if  the  human  or  animal  population  growth  or  decrease  is  significant  or  the
disease  causes  enough  deaths  to  influence  the  population  size, then  it  is  not reasonable  to  assume  that  the  population size is  constant (Mena-Lorca and  Hethcote  \cite{MenaHethcote1992}). To account for variable population sizes, many researchers have proposed epidemic models with transmission coefficient taking the following form
\begin{eqnarray}
  \tilde{\b}(N)&=&\frac{C(N)}{N},\label{S-ODEN1}
\end{eqnarray}
where $N=S+I+R$ is the total population size and $C(N)$ is the adequate contact rate. Then mass-action incidence  corresponds to the choice $C(N)=\beta N$ and standard incidence corresponds to $C(N)=\beta$, where $\beta$ is a positive constant. In general, $C(N)$ is an non-decreasing function with respect to $N$. For example,  $C(N)=\frac{aN}{1+bN+\sqrt{1+2bN}}$ (Heesterbeek,  and   Metz \cite{Heesterbeek1993}), $C(N)=\lambda N^{\alpha}, \alpha=0.05$ (Mena-Lorca and Hethcote \cite{MenaHethcote1992}). Other types of $C(N)$ can be found in \cite{Brauer2012,Thieme} and references therein.

In this paper we are interested in diffusive SIR models where individuals move randomly. While there is a rich literature on spatial epidemic models with delays or integral forms (see \cite{WWW12} for a brief review), several previous works have studied traveling wave solutions of basic diffusive Kermack-McKendrick SIR models. K\"all\'en \cite{Ka84} and Hosono and Ilyas \cite{HI94}  considered the existence of a traveling wave with mass-action incidence.
  \begin{eqnarray}
  \p_tS&=&d_1\p_{xx}S-\b SI ;\label{S-PDEN3}\\
  \p_tI&=&d_2\p_{xx}I+\b SI- \g I.\label{I-PDEN3}
\end{eqnarray}
In particular,  with the aid of the shooting technique and invariant manifold theory developed by Dunbar \cite{Du83,Du84}, Hosono and Ilyas \cite{HI95} proved that
that if the basic reproduction number $\b S_{-\infty}/\g>1$,
then for each $c\ge c^*=2\sqrt{d_2(\b S_{-\infty}-\g)}$
system
(\ref{S-PDEN3}-\ref{I-PDEN3}) has a traveling wave solution $(S(x + ct), I (x + ct))$  satisfying $ S(-\infty)=S_{-\infty}, I(\pm\infty)=0, S(\infty)=S_{\infty}< S_{-\infty}.$  On the other hand, there is no traveling solution for (\ref{S-PDEN3}-\ref{I-PDEN3}) if ${\b S_{-\infty}/\g}\le1$. In a more recent work  \cite{WWW12}, X-S. Wang, H. Wang and Wu proved a similar result for  the following diffusive Kermack-McKendrick SIR model with standard incidence
  \begin{eqnarray}
  \p_tS&=&d_1\p_{xx}S-\frac{\b SI}{S+I} ;\label{S-PDEN4}\\
  \p_tI&=&d_2\p_{xx}I+\frac{\b SI}{S+I}- \g I.\label{I-PDEN4}
\end{eqnarray}
(\ref{S-PDEN4}-\ref{I-PDEN4}) reflects that the recovereds are removed from the population and thus not involved in the contact and disease transmission (\cite{Brauer2012}).  The total population in (\ref{S-PDEN4}-\ref{I-PDEN4}) becomes $S+I$ and is not constant.  Related works can be found in a brief review at the end of \cite{WWW12}.

In this paper, we shall examine more general diffusive Kermack-McKendrick SIR models with the assumption that some of the infective individuals will be removed from the population due to disease-induced death or quarantine, but the recovered individuals will return in the community. Thus, the total population is $N=S+I+R$. Because of mobility of individuals it is more plausible to assume that the total population is not fixed. For simplicity, we will concentrate on standard incidence rate, $C(N)=\beta$ and study the diffusive Kermack-McKendrick SIR model when individuals move randomly, which is given by the following reaction-diffusion system
  \begin{eqnarray}
  \p_tS&=&d_1\p_{xx}S-\frac{\b SI}{N},\label{S-PDEN}\\
  \p_tI&=&d_2\p_{xx}I+\frac{\b SI}{N}-\g I -\delta I,\label{I-PDEN}\\
  \p_tR&=&d_3\p_{xx}R+\g { I},\label{R-PDEN}
\end{eqnarray}
here $N=S+I+R$ is the total population at location $x$ and time $t$, $d_1$, $d_2$ and $d_3$ are the diffusion rates of the susceptible, infective and recovered individuals, respectively. $\g\ge0$ is the recovery rate and $\delta \geq 0$ is the death/quanrantine rate of infective individuals.  The model captures the essential transmission dynamics with standard incidence and predicts infection propagation from the initial source of an outbreak.

As the model focuses on the outbreak situation and ignores the natural and death process, the model system (\ref{S-PDEN}-\ref{R-PDEN}) has infinitely many disease-free equilibria $(S, 0, R)$ with arbitrary $S\ge 0, R \geq 0$.
If we consider the corresponding spatial-homogenous ordinary differential system and linearize it around the trivial disease-free equilibrium $(S_{-\infty}, 0, 0)$, we obtain a simple linear equation for the infective individuals:
$$I'(t)=\b I-(\g+\delta)I.$$
A standard application of next-generation method \cite{DHR90,vW02} gives an explicit formula for the basic reproduction number
$$R_0={\b\over\g+\delta}.$$
As we shall see later, the basic reproduction number is an important threshold parameter in the existence theorem of traveling wave solutions.

\section{Traveling Wave Solutions}
A traveling wave solution is a special type of solutions with the form $(S(x+ct), I(x+ct, t),R(x+ct))$, and represents the transition process of an outbreak from the initial disease-free equilibrium $(S(-\infty), 0, R(-\infty))$ to another disease-free state $(S(\infty),0,R(\infty))$ with $S(\infty)$ being determined by the transmission rate and the disease specific recovery rate, as well as possibly the mobility of individuals. For applications to the disease control and prevention, it is important to determine whether traveling waves exist and what the minimal wave speed $c$ is. Thus we shall look for traveling wave solutions of the form $(S(x+ct), I(x+ct), R(x+ct))$.
\begin{align}
  cS'&=d_1S''-{\b SI\over S+I+R};\label{S-ODE}\\
  cI'&=d_2I''+{\b SI\over S+I+R}-(\g+\d) I;\label{I-ODE}\\
  cR'&=d_3R''+\g I.\label{R-ODE}
\end{align}
It is known that for cooperative systems, the the minimal wave speed can be determined from linearization at low population densities \cite{Weinberger2002,Weinberger2002-1,Weinberger2007}.  Building on the prior work,  one of the authors \cite{Wang2010jns} showed that, for cooperative and a large class of non-cooperative systems, the speed of traveling wave solutions are simply the eigenvalues of the parameterized Jacobian matrix of its linearized system at the initial state and therefore the so-called minimum speed is the simply the minimum of the eigenvalues. Analogous formula for recursion systems was developed in Lui \cite{Lui1989}, and  Weinberger, Lewis and  Li \cite{Weinberger2002-1} extended it to cooperative systems of reaction-diffusion equations based on the time $1$ maps. Here we follow the direct derivation in \cite{Wang2010jns} from a traveling wave solution. Let us consider a system of reaction-diffusion equations
\begin{equation}\label{eq11}
\u_t=D\u_{xx}+\f(\u) \text{ for } x \in \mathbb{R},\; t\geq 0
\end{equation}
where $\u=(u_i)$, $D=\text{diag} (d_1, d_2, ...,d_N),  d_i>0 \text{ for } i=1,...,N$
$$\f(u)=(f_1(u),f_2(u),...,f_N(u)),$$
We are looking for a traveling wave solution $u$ of (\ref{eq11}) of the form $\u=\u(x+ct), u \in C(\mathbb{R}, \mathbb{R}^N)$ with a speed of $c$ . Substituting $\u(x,t)=\u(x+ct)$ into (\ref{eq11}) and letting $\xi=x+ct$, we obtain the wave equation
\begin{equation}\label{eq211}
D\u''(\xi)-c\u'(\xi)+\f(\u(\xi))=0\text{ for } \xi \in \mathbb{R}.
\end{equation}
Now if we look for a solution of the form
$(u_i)=\big ( e^{\lambda \xi}\eta^i_{\lambda}\big),\lambda>0, \eta_{\lambda}=(\eta^i_{\lambda})>>0$ for the linearization of (\ref{eq211}) at an initial equilibrium $E$,
we arrive at the following system
\begin{equation*}
\text{diag}(d_i \lambda^2 -c \lambda)\eta_{\lambda}+f'(E)\eta_{\lambda}=0
\end{equation*}
which can be rewritten as the following eigenvalue problem
\begin{equation}\label{egenvalue}
\frac{1}{\lambda}A_{\lambda}\vect{\eta_{\lambda}}= c \vect{\eta_{\lambda}},
\end{equation}
where
\begin{equation*}
A_{\lambda}=(a^{i,j}_{\lambda})=\text{diag}(d_i\lambda^2)+\f'(E)
\end{equation*}
 Let $\Psi(A_{\lambda})$ be the spectral radius of $A_{\lambda}$ for $ \lambda \in [0, \infty)$,  $$
\Phi(\lambda)=\frac{1}{\lambda} \Psi(A_{\lambda})> 0.
$$

In \cite{Wang2010jns}, under assumption that $\f'(E)$ has nonnegative off diagonal elements and others conditions, it was shown that $\Phi(\lambda)$ is a convex-like function and $\Phi(\lambda)$ goes to $\infty$ at both of $0$ and $\infty$. Therefore $\Phi(\lambda)$ assumes the minimum over the domain $(0, \infty)$, which is the minimum speed of (\ref{eq11}).
$$
c^*=\inf_{\lambda>0}\Phi(\lambda)>0
$$
For  the SIR model (\ref{S-PDEN}-\ref{R-PDEN}), $\f$ is no longer cooperative and some of the off diagonal elements of $\f'(E)$ may be negative.  It is still an open question what additional conditions would make $\Phi(\lambda)$ maintain the convex-like properties.

Nevertheless, we can calculate the minimum wave speed of (\ref{S-PDEN}-\ref{R-PDEN}) from its largest eigenvalue of linearized system at the initial equilibrium $E=(S_{-\infty}, 0, 0)$.  From biological perspective, we are interested in a traveling wave solution connecting $(S_{-\infty}, 0, 0)$  to another disease-free state $(S_\infty,0,R_\infty)$. Now a standard procedure can calculate that  the Jacobian of (\ref{S-PDEN}-\ref{R-PDEN}) at $(S_{-\infty}, 0, 0)$ is $$
\f'(E)=\left(
  \begin{array}{ccc}
    0 & -\beta &0\\
    0 & \beta-\gamma-\delta & 0\\
    0& \gamma & 0
  \end{array}
\right).
$$
For $\lambda \geq 0$, three eigenvalues of the matrix  $$
\left(
  \begin{array}{ccc}
    d_1 \lambda^2 & -\beta &0\\
    0 & d_2 \lambda^2+\beta-\gamma-\delta & 0\\
    0& \gamma & d_3 \lambda^2
  \end{array}
\right)
$$
are $d_1\lambda^2, d_2\lambda^2+\beta-\gamma-\delta, d_3 \lambda^2$. For $\lambda=0$, the largest eigenvalue is $\beta-\gamma-\delta$. Therefore, the minimum wave speed $c^*$ is the minimum of $$
\inf_{\lambda>0}\frac{d_2\lambda^2+\beta-\gamma-\delta}{\lambda}.
$$
A standard calculation shows that $c^*=2\sqrt{d_2(\b-\g-\d)}$.  The following theorem confirms that $c>c^*:=2\sqrt{d_2(\b-\g-\d)}$ is the cut-off number for the existence of traveling solutions connecting the disease-free equilibrium $(S_{-\infty}, 0, 0)$ to another disease-free state $(S_\infty,0, R_{\infty})$.  The two facts indicate that the wave speed of (\ref{S-PDEN}-\ref{R-PDEN}) can be determined from its linearization at  the initial disease-free equilibrium. Our main result is stated as follows.

\begin{thm}\label{thm} Assume that the constants $d_i>0$ with $i=1,2,3$, $\beta>0$, $\gamma>0$ and $\delta \geq 0.$ if
  \begin{align}\label{d3}
  d_3<2d_2,
  \end{align}
  then the minimal wave speed of (\ref{S-PDEN}-\ref{R-PDEN}) can be determined from its linearizaion at the initial disease-free equilibrium.  Specifically, for any $\Sm>0$, $R_0:=\b/(\g+\d)>1$ and $c>c^*:=2\sqrt{d_2(\b-\g-\d)}$, there exist $\Sp<\Sm$ and a traveling wave solution  for (\ref{S-PDEN}-\ref{R-PDEN}) such that
  $S(-\infty)=\Sm$, $S(\infty)=\Sp$, $I(\pm\infty)=0$, $R(-\infty)=0$ and $R(\infty)=\g(\Sm-\Sp)/(\g+\d)$.

  Furthermore, $S(x)$ is decreasing, $0\le I(x)\le\Sm-\Sp$ for $x\in\R$, $R(x)$ is increasing, and
  \begin{align}
    \int_{-\infty}^\infty(\g+\d) I(x)dx=\int_{-\infty}^\infty {\b S(x)I(x)\over S(x)+I(x)+R(x)}dx=c(\Sm-\Sp).
  \end{align}
  On the other hand, if $c<c^*$ or $R_0\le1$, then there does not exist a non-trivial and non-negative traveling wave solution for (\ref{S-PDEN}-\ref{R-PDEN}) such that
  $S(-\infty)=\Sm$, $S(\infty)<\Sm$, $I(\pm\infty)=0$ and $R(-\infty)=0$.
\end{thm}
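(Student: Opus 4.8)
\emph{Strategy and the nonexistence half.} I would prove existence by a Schauder fixed point argument and nonexistence by elementary comparison/integration, disposing of the latter first. If $c\le 0$ nonexistence is immediate: integrating \eqref{S-ODE} over $\R$ and using $S'(\pm\infty)=0$ gives $c(\Sp-\Sm)=-\int_\R\b SI/(S+I+R)$, impossible when $c<0$ and $\Sp<\Sm$, while $c=0$ forces $SI\equiv 0$ and hence $I\equiv 0$ (as $S(-\infty)=\Sm>0$). So let $c>0$. If $R_0\le 1$, i.e.\ $\b\le\g+\d$, then \eqref{I-ODE} gives $d_2I''-cI'=(\g+\d)I-\b SI/(S+I+R)\ge(\g+\d-\b)I\ge 0$, so $(I'e^{-c\xi/d_2})'\ge 0$; were $I'e^{-c\xi/d_2}$ positive (resp.\ negative) at one point it would remain so to the right (resp.\ left), forcing $I(+\infty)=+\infty$ (resp.\ $I$ nondecreasing to the left, so $\liminf_{-\infty}I>0$), contradicting the prescribed limits; hence $I\equiv 0$, against non-triviality. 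If $R_0>1$ but $c<c^*=2\sqrt{d_2(\b-\g-\d)}$: from $S(-\infty)=\Sm>0$, $I(-\infty)=R(-\infty)=0$, for small $\ep>0$ there is $X$ with $\b S/(S+I+R)>\b-\ep$ on $(-\infty,-X)$, so $d_2I''-cI'+\mu I\le 0$ there with $\mu=\b-\ep-\g-\d$; pick $\ep$ small so $\mu>0$ and $c<2\sqrt{d_2\mu}$; then $I=e^{c\xi/(2d_2)}J$ gives $J''+\omega^2J\le 0$ with $\omega^2=\mu/d_2-c^2/(4d_2^2)>0$, and testing against $\sin(\omega(\xi-a))$ over an interval $[a,a+\pi/\omega]\subset(-\infty,-X)$ forces the non-negative $J$ to vanish at both endpoints, hence $J\equiv 0$ near $-\infty$, so $I\equiv 0$ on $\R$ by ODE uniqueness; contradiction.

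\emph{Existence: the fixed point.} Let $0<\l_1<\l_2$ be the positive roots of $\mathcal P(\l):=d_2\l^2-c\l+(\b-\g-\d)$ (so $\mathcal P<0$ on $(\l_1,\l_2)$ and $\l_1<c/(2d_2)$), and fix $\eta\in(0,\min\{\l_1,\l_2-\l_1\})$. The plan is to use the super-/sub-solutions $\bar I(\xi)=e^{\l_1\xi}$ and $\underline I(\xi)=\max\{e^{\l_1\xi}-Me^{(\l_1+\eta)\xi},\,0\}$ for large $M$. For a triple $(\hat S,\hat I,\hat R)$ with $0\le\hat S\le\Sm$, $\underline I\le\hat I\le\bar I$, $0\le\hat R\le\bar R$, set $T(\hat S,\hat I,\hat R)=(S,I,R)$ by: $I=\mathcal K*\big[\b\hat S\hat I/(\hat S+\hat I+\hat R)\big]$ with $\mathcal K>0$ the exponentially decaying $L^1$ Green's function of $d_2\p_{\xi\xi}-c\p_\xi-(\g+\d)$; $S$ the bounded solution of the homogeneous equation $d_1S''-cS'-\big[\b\hat I/(\hat S+\hat I+\hat R)\big]S=0$, $S(-\infty)=\Sm$; and $R$ the solution of $cR'=d_3R''+\g\hat I$, $R(-\infty)=0$, bounded near $-\infty$. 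Then $0\le S\le\Sm$ and $S$ nonincreasing (maximum principle for a linear operator with nonpositive zeroth-order coefficient, plus monotonicity of $S'e^{-c\xi/d_1}$); $R\ge 0$ and nondecreasing (monotonicity of $R'e^{-c\xi/d_3}$), with $R\le\bar R$ the solution forced by $\g\bar I$ --- and here \eqref{d3} enters: $d_3<2d_2$ yields $\l_1<c/(2d_2)<c/d_3$, hence $c-d_3\l_1>0$ and $\bar R(\xi)=O(e^{\l_1\xi})$ as $\xi\to-\infty$, so $\bar R(-\infty)=0$. The inclusion $\underline I\le I\le\bar I$ follows from $\widetilde{\mathcal K}(\l_1)=1/\b$ (equivalent to $\mathcal P(\l_1)=0$), $\b\widetilde{\mathcal K}(\l_1+\eta)=\b/(\b-\mathcal P(\l_1+\eta))<1$, and the estimate $\b\hat S\hat I/(\hat S+\hat I+\hat R)=\b\hat I-\b(\hat I+\hat R)\hat I/(\hat S+\hat I+\hat R)\ge\b\underline I-O(e^{2\l_1\xi})$ near $-\infty$, the last step again using \eqref{d3} to bound $\hat R$ by $O(e^{\l_1\xi})$ there; for $M$ large the arithmetic closes. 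On a suitable weighted space of continuous functions $T$ is continuous and compact (smoothing of $\mathcal K$, continuous dependence of the ODEs), so Schauder gives a fixed point $(S,I,R)$ solving \eqref{S-ODE}--\eqref{R-ODE} on $\R$ with $\underline I\le I\le\bar I$, $S$ nonincreasing in $[0,\Sm]$ with $S(-\infty)=\Sm$, $R$ nondecreasing with $R(-\infty)=0$, and $I(-\infty)=0$, $I\not\equiv 0$.

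\emph{Existence: passing to the limits.} Since $S$ is bounded and monotone it has a limit $\Sp\in[0,\Sm]$ with $S'(\pm\infty)=0$; integrating \eqref{S-ODE} over $\R$ gives $\int_\R\b SI/(S+I+R)=c(\Sm-\Sp)<\infty$, so the right-hand factor in $I=\mathcal K*[\b SI/(S+I+R)]$ lies in $L^1$, whence $I$ is bounded and $I(\pm\infty)=0$; integrating \eqref{I-ODE} gives $(\g+\d)\int_\R I=\int_\R\b SI/(S+I+R)=c(\Sm-\Sp)$, integrating \eqref{R-ODE} gives $R(\infty)=\g(\Sm-\Sp)/(\g+\d)$, and $\Sp<\Sm$ because $\Sp=\Sm$ would force $SI\equiv 0$, hence $I\equiv 0$ near $-\infty$ (where $S>0$), contradicting $I\ge\underline I>0$. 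Evaluating the integrated forms of \eqref{S-ODE} and \eqref{I-ODE} at a maximum point $\xi_0$ of $I$ (so $I'(\xi_0)=0$) gives $cI(\xi_0)\le\int_{-\infty}^{\xi_0}\b SI/(S+I+R)=c(\Sm-S(\xi_0))+d_1S'(\xi_0)\le c(\Sm-\Sp)$, so $0\le I\le\Sm-\Sp$ on $\R$. The Laplace transform $\wt I(\l)=\int_\R e^{-\l\xi}I\,d\xi$ is used to make the left-tail behavior precise --- from $\mathcal P(\l)\wt I(\l)=\int_\R e^{-\l\xi}\,\b(I+R)I/(S+I+R)\,d\xi\ge 0$ one reads off that the singularity of $\wt I$ sits at $\l=\l_1$, i.e.\ $I$ decays exactly like $e^{\l_1\xi}$ --- and to re-derive the mass identities cleanly.

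\emph{Main obstacle.} The hard part is the existence half. Because $\f$ is not cooperative, the monotone-iteration/comparison machinery of \cite{Wang2010jns,Weinberger2002-1} does not apply directly, and because the $R$-equation has no zeroth-order term $R$ is only weakly controlled near $\xi=-\infty$; hypothesis \eqref{d3} is precisely what forces $c-d_3\l_1>0$, so that $\bar R$ decays at the same exponential rate $e^{\l_1\xi}$ as $I$ and the nonlinear perturbation of the linearized $I$-equation stays of higher order than $e^{(\l_1+\eta)\xi}$, which is what the sub-solution $\underline I$ can absorb. I expect the most delicate point to be verifying $T(\Gamma)\subseteq\Gamma$ --- balancing the $\bar I$, $\underline I$, $\bar R$ estimates and the constant $M$ --- and choosing the weighted space in which $T$ is compact; once the fixed point exists, monotonicity of $S$ makes $S$ converge, and the entire $+\infty$ picture (integrability of $I$, the value of $\Sp$, the identities) then follows with little extra work.
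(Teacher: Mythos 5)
Your overall architecture matches the paper's (Schauder fixed point between exponential super-/sub-solutions for existence, with $\l_1$ equal to the paper's $\l_0$ and condition (\ref{d3}) entering exactly through $c-d_3\l_0>0$; an elementary monotonicity argument for $R_0\le1$), and two of your deviations are genuine improvements in economy: the Sturm/oscillation argument for $c<c^*$ (reducing to $J''+\omega^2J\le0$ for a nonnegative $J$ and testing against $\sin(\omega(\xi-a))$) is a correct and more elementary replacement for the paper's two-sided Laplace transform, and your maximum-point derivation of $I\le \Sm-\Sp$ is cleaner than the paper's auxiliary function $J$. However, there is a genuine gap in the existence half: your invariant set imposes only $0\le\hat S\le\Sm$, with no positive lower barrier on $S$. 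The crucial estimate you invoke for the sub-solution inclusion, namely $\b\hat S\hat I/(\hat S+\hat I+\hat R)\ge\b\underline I-O(e^{2\l_1\xi})$ near $-\infty$, is false on that set: the error term is $\b\hat I(\hat I+\hat R)/(\hat S+\hat I+\hat R)$, which is only $O(e^{2\l_1\xi})$ if $\hat S+\hat I+\hat R$ is bounded below by a positive constant near $-\infty$; if $\hat S\equiv0$ there, the quotient $(\hat I+\hat R)/(\hat S+\hat I+\hat R)$ equals $1$, the incidence term can vanish identically, and your map returns $I=\mathcal K*0=0<\underline I$. So $T(\G)\subseteq\G$ fails as stated. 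The fix is exactly the paper's sub-solution $S_-=\max\{\Sm(1-M_1e^{\ep_1 x}),0\}$ from (\ref{S-}), together with the differential inequality (\ref{S-ine}) of Lemma \ref{lem-sub-ine} ensuring the $S$-component of the map stays above $S_-$, and the corresponding use of $S_-$ in the denominator bound in (\ref{I-ine}); this is not a routine omission but one of the two load-bearing inequalities of the construction.

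A secondary, fixable weakness: your $S$-component is defined as ``the bounded solution of $d_1S''-cS'-[\b\hat I/(\hat S+\hat I+\hat R)]S=0$ with $S(-\infty)=\Sm$.'' Existence, uniqueness, the claimed monotonicity, and above all continuity and compactness of this solution operator on the weighted space are nontrivial and unaddressed (the solution space bounded at $-\infty$ is two-dimensional since the characteristic exponents there are $0$ and $c/d_1>0$, so selecting and controlling the right solution requires an argument). The paper sidesteps all of this by adding $\a_iu_i$ to both sides and inverting the constant-coefficient operator $\D_i=-d_i\p_{xx}+c\p_x+\a_i$ with an explicit positive kernel, so that all three components of the map have the same structure and invariance reduces to the pointwise inequalities of Lemmas \ref{lem-sub-ine} and \ref{lem-D-D}. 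I would recommend adopting that device; with it, and with $S_-$ restored to the invariant set, your outline closes.
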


The technical condition (\ref{d3}) is  similar to the first inequality of assumption (2.7) in \cite{Weinberger2002}. It will be used in the construction of super-solutions and sub-solutions. The result also reveals that the basic reproduction number $R_0$ plays an essential role as in the corresponding spatial-homogenous ordinary differential system.

\section{Existence of traveling solutions}
Throughout this section, we assume that $R_0:=\b/(\g+\d)>1$, $c>c^*:=2\sqrt{d_2(\b-\g-\d)}$ and
the inequality (\ref{d3}) is satisfied.
It is noted that $R_0$ is the basic reproduction number for the ordinary differential system without diffusion \cite{DHR90,vW02}.
Moreover, linearizing the equation for $I$ at the point $(\Sm,0,0)$ gives the characteristic function
\begin{align}\label{f}
  f(\l):=-d_2\l^2+c\l-(\b-\g-\d).
\end{align}
We denote by
\begin{align}\label{l0}
  \l_0:={c-\sqrt{c^2-4d_2(\b-\g-\d)}\over2d_2}>0
\end{align}
the smaller positive root of the characteristic function $f(\l)$.
It is readily seen that the inequality (\ref{d3}) implies
\begin{align}\label{d3-ine}
  c-d_3\l_0>0.
\end{align}
Let $\a_1$, $\a_2$ and $\a_3$ be three sufficiently large constants, we define the second-order differential operator $\D_i$ with $i=1,2,3$ by
\begin{align}\label{Di}
  \D_ih:=-d_ih''+ch'+\a_ih
\end{align}
for any $h\in C^2(\R)$.
Let
\begin{align}\label{li}
  \l_i^\pm={c\pm\sqrt{c^2+4d_i\a_i}\over2d_i} \;\;\;\;\; (\text{note that } \l_i^{-}<0<-\l_i^{-}<\l_i^{+})
\end{align}
be the two roots of the function
\begin{align}\label{fi}
  f_i(\l):=-d_i\l^2+c\l+\a_i.
\end{align}
Denote
\begin{align}\label{ri}
  \r_i:=d_i(\l_i^+-\l_i^-)=\sqrt{c^2+4d_i\a_i}.
\end{align}
The inverse operator $\D_i^{-1}$ is given by the following integral representation
\begin{align}\label{Di-}
  (\D_i^{-1}h)(x):={1\over\r_i}\int_{-\infty}^xe^{\l_i^-(x-y)}h(y)dy+{1\over\r_i}\int_x^\infty e^{\l_i^+(x-y)}h(y)dy
\end{align}
for $h\in C_{\mu^-,\mu^+}(\R)$ with $\mu^->\l_i^-$ and $\mu^+<\l_i^+$, where
\begin{align}\label{Cmu}
  C_{\mu^-,\mu^+}(\R):=\{h\in C(\R):\ \sup_{x\le0}|h(x)e^{-\mu^-x}|+\sup_{x\ge0}|h(x)e^{-\mu^+x}|<\infty\}.
\end{align}
It is readily seen from its integral representation in (\ref{Di-}) that $\D_i^{-1}h$ is differentiable and
\begin{align}
  (\D_i^{-1}h)'(x)&={\l_i^-\over\r_i}\int_{-\infty}^xe^{\l_i^-(x-y)}h(y)dy+{\l_i^+\over\r_i}\int_x^\infty e^{\l_i^+(x-y)}h(y)dy;\label{Di-'}\\
  (\D_i^{-1}h)''(x)&={(\l_i^-)^2\over\r_i}\int_{-\infty}^xe^{\l_i^-(x-y)}h(y)dy+{(\l_i^+)^2\over\r_i}\int_x^\infty e^{\l_i^+(x-y)}h(y)dy-{h(x)\over d_i}.\label{Di-''}
\end{align}
We choose
$$\a_1>\b,\quad \a_2>\g+\d\quad \mbox{and}\quad \a_3>0$$
be sufficiently large such that $|\l_i^-|=-\l_i^->\l_0>0$ for $i=1,2,3$.
Given $\mu>\l_0>0$ such that $\mu<-\l_i^-$ for all $i=1,2,3$,  we have
$$
\l_0<\mu <- \l_i^-<\l_i^+,\ i=1,2,3
$$
(see the definitions of $\l_0$ and $\l_i^-$ in (\ref{l0}) and (\ref{li})) and
$$
\l_i^-< -\mu < \mu<\l_i^+,\ i=1,2,3.
$$
Now we can define the Banach space
\begin{align}\label{Bmu}
  B_\mu(\R,\R^n):=\underbrace{C_{-\mu,\mu}(\R)\times \cdots\times C_{-\mu,\mu}(\R)}_{n}
\end{align}
equipped with the norm
\begin{align}\label{norm}
  |u|_\mu:=\max_{1\le i\le n}\sup_{x\in\R}e^{-\mu|x|}|u_i(x)|,
\end{align}
where $u=(u_1,\cdots,u_n)\in B_\mu(\R,\R^n)$ with $n$ being a positive integer.
We then define a map $F=(F_1,F_2,F_3)$ on the space $B_\mu(\R,\R^3)$: given $u=(u_1,u_2,u_3)\in B_\mu(\R,\R^3)$, let
\begin{align}
  F_1(u_1,u_2,u_3)&:=\D_1^{-1}[\a_1u_1-\b u_1u_2/(u_1+u_2+u_3)];\label{F1}\\
  F_2(u_1,u_2,u_3)&:=\D_2^{-1}[\a_2u_2+\b u_1u_2/(u_1+u_2+u_3)-(\g+\d) u_2];\label{F2}\\
  F_3(u_1,u_2,u_3)&:=\D_3^{-1}[\a_3u_3+\g u_2].\label{F3}
\end{align}
The following lemma shows that the fixed point of the map $F$ is indeed a traveling wave solution.
\begin{lem}
  Let $(S,I,R)\in B_\mu(\R,\R^3)$ be a fixed point of the map $F$, then $(S,I,R)$ satisfies the traveling wave equations (\ref{S-ODE}-\ref{R-ODE}).
\end{lem}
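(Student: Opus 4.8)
The plan is to check that the differential operator $\D_i$ from (\ref{Di}) is a genuine left inverse of the integral operator $\D_i^{-1}$ from (\ref{Di-}) on the relevant weighted spaces, and then simply to apply $\D_i$ to each coordinate of the fixed-point identity $F(S,I,R)=(S,I,R)$. First I would fix the functional setting. Since $(S,I,R)$ is a fixed point, the right-hand sides of (\ref{F1})--(\ref{F3}) are defined at $(S,I,R)$, so the three functions $g_1:=\a_1S-\b SI/(S+I+R)$, $g_2:=\a_2I+\b SI/(S+I+R)-(\g+\d)I$ and $g_3:=\g I+\a_3R$ lie in spaces $C_{\mu^-,\mu^+}(\R)$ of the form (\ref{Cmu}) with $\mu^->\l_i^-$ and $\mu^+<\l_i^+$ for $i=1,2,3$ (indeed $S,I,R\in C_{-\mu,\mu}(\R)$, and these exponents are admissible by the choice $\l_0<\mu<-\l_i^-$). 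Consequently the representation (\ref{Di-}) and the differentiation formulas (\ref{Di-'})--(\ref{Di-''}) apply to $g_i$, and in particular $S=\D_1^{-1}g_1$, $I=\D_2^{-1}g_2$ and $R=\D_3^{-1}g_3$ are all of class $C^2(\R)$.

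The heart of the argument is the identity $\D_i(\D_i^{-1}h)=h$ for every $h$ in such a weighted space. To verify it I would substitute (\ref{Di-'}) and (\ref{Di-''}) into $\D_i g=-d_ig''+cg'+\a_ig$ with $g=\D_i^{-1}h$: the terms proportional to $\int_{-\infty}^xe^{\l_i^-(x-y)}h(y)\,dy$ combine with total coefficient $\r_i^{-1}\big(-d_i(\l_i^-)^2+c\l_i^-+\a_i\big)=\r_i^{-1}f_i(\l_i^-)=0$, and likewise the terms proportional to $\int_x^\infty e^{\l_i^+(x-y)}h(y)\,dy$ combine with coefficient $\r_i^{-1}f_i(\l_i^+)=0$, since $\l_i^\pm$ are precisely the two roots of $f_i$ by (\ref{li})--(\ref{ri}); the only surviving contribution is the non-integral term $-d_i\cdot\big(-h(x)/d_i\big)=h(x)$ coming from (\ref{Di-''}). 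I expect this step to be the only mildly delicate point: it uses the weighted bound on $h$ together with $\mu^->\l_i^-$ and $\mu^+<\l_i^+$ to make the two half-line integrals absolutely convergent and to legitimize differentiating under the integral sign; beyond that it is pure bookkeeping with the quadratic $f_i$.

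With this in hand the conclusion is immediate. Applying $\D_1$ to $S=\D_1^{-1}g_1$ gives $-d_1S''+cS'+\a_1S=g_1=\a_1S-\b SI/(S+I+R)$; cancelling $\a_1S$ and rearranging yields $cS'=d_1S''-\b SI/(S+I+R)$, which is exactly (\ref{S-ODE}). In the same way, applying $\D_2$ to $I=\D_2^{-1}g_2$ and cancelling $\a_2I$ gives $cI'=d_2I''+\b SI/(S+I+R)-(\g+\d)I$, which is (\ref{I-ODE}), and applying $\D_3$ to $R=\D_3^{-1}g_3$ and cancelling $\a_3R$ gives $cR'=d_3R''+\g I$, which is (\ref{R-ODE}). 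Hence the fixed point $(S,I,R)$ satisfies the traveling wave equations (\ref{S-ODE})--(\ref{R-ODE}); since these are precisely the profile equations obtained by substituting $(S(x+ct),I(x+ct),R(x+ct))$ into (\ref{S-PDEN})--(\ref{R-PDEN}), it is a traveling wave solution, as claimed.
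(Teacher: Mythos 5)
Your proposal is correct and follows essentially the same route as the paper: both establish $\D_i(\D_i^{-1}h)=h$ via the differentiation formulas (\ref{Di-'})--(\ref{Di-''}) and the fact that $\l_i^\pm$ are the roots of $f_i$, then apply $\D_i$ to each component of the fixed-point identity and cancel the $\a_i$ terms to recover (\ref{S-ODE})--(\ref{R-ODE}). Your write-up merely spells out the coefficient bookkeeping and the admissibility of the weighted-space exponents that the paper leaves implicit.
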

\begin{proof}
  Set $h_1:=\a_1S-\b SI/(S+I+R)$. It follows from (\ref{ri}), (\ref{Di-}), (\ref{Di-'}), (\ref{Di-''}) and the fact that $\l_1^\pm$ are the roots of $f_1(\l)=-d_1\l^2+c+\a_1$ that
  $$-d_1(\D_1^{-1}h_1)''+c(\D_1^{-1}h_1)'+\a_1(\D_1^{-1}h_1)=h_1.$$
  Since $(S,I,R)$ is a fixed point of $F$, it follows that $\D_1^{-1}h_1=S$. Thus, the above equation is the same as (\ref{S-ODE}). Similarly, we can show that the other two equations (\ref{I-ODE}) and (\ref{R-ODE}) are also satisfied.
\end{proof}
For $x\in\R$, we define super-solutions and sub-solutions as follows:
\begin{align}
  S_+(x)&:=S_{-\infty};\label{S+}\\
  S_-(x)&:=\max\{S_{-\infty}(1-M_1e^{\ep_1x}), 0\};\label{S-}\\
  I_+(x)&:=e^{\l_0x};\label{I+}\\
  I_-(x)&:=\max\{e^{\l_0x}(1-M_2e^{\ep_2x}), 0\};\label{I-}\\
  R_+(x)&:={\g\over c\l_0-d_3\l_0^2}e^{\l_0x};\label{R+}\\
  R_-(x)&:=\max\{{\g\over c\l_0-d_3\l_0^2}e^{\l_0x}(1-M_3e^{\ep_3x}), 0\},\label{R-}
\end{align}
where $M_1,M_2,M_3,\ep_1,\ep_2,\ep_3$ are six positive constants to be determined in the following lemma. Its proof can be found in Appendix.
\begin{lem}\label{lem-sub-ine}
  Given sufficiently large $M_1>0$, $M_2>0$, $M_3>0$ and sufficiently small $\ep_1>0$, $\ep_2>0$, $\ep_3>0$, we have
  \begin{equation}\label{S-ine}
    -\b I_+\ge-d_1S_-''+cS_-'
  \end{equation}
  for $x\le x_1:=-\ep_1^{-1}\ln M_1$, and
  \begin{equation}\label{I-ine}
    {\b S_-I_-\over S_-+I_++R_+}-(\g+\d) I_-\ge-d_2I_-''+cI_-'
  \end{equation}
  for $x\le x_2:=-\ep_2^{-1}\ln M_2$, and
  \begin{equation}\label{R-ine}
    \g I_-\ge-d_3R_-''+cR_-'
  \end{equation}
  for $x\le x_3:=-\ep_3^{-1}\ln M_3$.
\end{lem}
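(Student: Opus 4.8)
The plan is to notice that on each half–line $x\le x_i$ the truncation $\max\{\cdot,0\}$ in (\ref{S-}), (\ref{I-}), (\ref{R-}) selects the smooth exponential branch (indeed $M_ie^{\ep_ix}\le M_ie^{\ep_ix_i}=1$ there), so that each of (\ref{S-ine}), (\ref{I-ine}), (\ref{R-ine}) becomes, after inserting the explicit formulas and using that $\l_0$ is a root of $f$ in (\ref{f}), an elementary inequality between exponentials. Two structural facts drive the argument. First, since $c>c^*$ and $\b-\g-\d>0$ (as $R_0>1$), the quadratic $f$ has two distinct positive roots and $\l_0$ is the smaller one, so $f(\l_0+\ep_2)>0$ for every sufficiently small $\ep_2>0$; this strict positivity is the reserve that will absorb the lower–order error terms. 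Second, the normalizing constant $K:=\g/(c\l_0-d_3\l_0^2)$ in $R_\pm$ --- positive by (\ref{d3-ine}), since $c\l_0-d_3\l_0^2=\l_0(c-d_3\l_0)$ --- is chosen so that $-d_3R_-''+cR_-'=\g e^{\l_0x}-C_3M_3e^{(\l_0+\ep_3)x}$, with leading term $\g e^{\l_0x}$ and $C_3:=K(\l_0+\ep_3)[c-d_3(\l_0+\ep_3)]>0$ for $\ep_3$ small (again by (\ref{d3-ine})); moreover $I_++R_+=(1+K)e^{\l_0x}$, which makes the incidence fraction easy to estimate near $-\infty$. I would fix the six constants in the order $\ep_2$, then $\ep_1$ and $\ep_3$ (with $\ep_3\le\ep_2$), all small enough for the sign conditions above, and then $M_1$, then $M_2$, then $M_3$, each large relative to the earlier ones.

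The inequalities (\ref{S-ine}) and (\ref{R-ine}) are the easy ones. For (\ref{S-ine}), on $x\le x_1$ one computes $-d_1S_-''+cS_-'=\Sm M_1\ep_1(d_1\ep_1-c)e^{\ep_1x}$, which is negative once $\ep_1<c/d_1$, so (\ref{S-ine}) amounts to $\Sm M_1\ep_1(c-d_1\ep_1)\ge\b e^{(\l_0-\ep_1)x}$; taking also $\ep_1<\l_0$ and $M_1\ge1$ makes $x_1\le0$ and $e^{(\l_0-\ep_1)x}\le1$ on $x\le x_1$, so it suffices that $M_1$ be large enough that $\Sm M_1\ep_1(c-d_1\ep_1)\ge\b$. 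For (\ref{R-ine}), on $x\le x_3$ one has the identity for $-d_3R_-''+cR_-'$ recorded above, while $\g I_-\ge\g e^{\l_0x}(1-M_2e^{\ep_2x})=\g e^{\l_0x}-\g M_2e^{(\l_0+\ep_2)x}$ for all $x$; hence (\ref{R-ine}) reduces to $C_3M_3\ge\g M_2e^{(\ep_2-\ep_3)x}$, and since $\ep_3\le\ep_2$ and $x\le x_3\le0$ the right side is at most $\g M_2$, so it suffices to take $M_3$ large relative to $M_2$.

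The main obstacle is (\ref{I-ine}). Here I would first choose $M_2$ large relative to $M_1$ so that $\Sm M_1e^{\ep_1x}\le\Sm/2$ for all $x\le x_2$ (hence in particular $x_2\le x_1$), whence $S_-\ge\Sm/2$ on $x\le x_2$. Then, using $I_++R_+=(1+K)e^{\l_0x}$ and $S_-+(1+K)e^{\l_0x}\ge S_-$,
\[\frac{\b S_-}{S_-+I_++R_+}-(\g+\d)\ \ge\ (\b-\g-\d)-\frac{2\b(1+K)}{\Sm}e^{\l_0x}\ =:\ (\b-\g-\d)-Ae^{\l_0x}\qquad(x\le x_2).\]
Multiplying this by $I_-\ge0$, substituting $I_-=e^{\l_0x}-M_2e^{(\l_0+\ep_2)x}$ on $x\le x_2$, and using $-d_2I_-''+cI_-'=(\b-\g-\d)e^{\l_0x}-C_2M_2e^{(\l_0+\ep_2)x}$ with $C_2:=(\l_0+\ep_2)[c-d_2(\l_0+\ep_2)]=f(\l_0+\ep_2)+(\b-\g-\d)$, the inequality (\ref{I-ine}) collapses --- after cancelling $(\b-\g-\d)e^{\l_0x}$ and dividing by $e^{(\l_0+\ep_2)x}>0$ --- to
\[f(\l_0+\ep_2)M_2+AM_2e^{\l_0x}\ \ge\ Ae^{(\l_0-\ep_2)x}\qquad(x\le x_2).\]
Since $AM_2e^{\l_0x}\ge0$, $f(\l_0+\ep_2)>0$, and $e^{(\l_0-\ep_2)x}\le1$ for $x\le x_2\le0$ when $\ep_2<\l_0$, it is enough to take $M_2\ge A/f(\l_0+\ep_2)$.

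I expect the genuine difficulty to be precisely this bookkeeping of constants for (\ref{I-ine}): the quadratic correction $-Ae^{2\l_0x}$ coming from the nonlinear incidence term, and the mismatch between the $e^{(\l_0+\ep_2)x}$–coefficients $C_2$ and $\b-\g-\d$, must be dominated by the $e^{\l_0x}$–part of $I_-$, which works only because $f(\l_0+\ep_2)>0$ --- and that positivity is exactly where the hypotheses $c>c^*$ and $R_0>1$ enter. The technical assumption (\ref{d3}), through (\ref{d3-ine}), serves only to make $K>0$ and $c-d_3(\l_0+\ep_3)>0$, so that $R_\pm$ are legitimate positive sub- and super-solutions and the identity for $-d_3R_-''+cR_-'$ has the right sign pattern.
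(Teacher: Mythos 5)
Your proposal is correct and follows essentially the same route as the paper's proof: on each half-line $x\le x_i$ the truncation is inactive, the inequalities reduce to comparisons of exponentials via $f(\l_0)=0$, and the key points are exactly the ones the paper uses --- the strict positivity $f(\l_0+\ep_2)>0$ for the middle inequality and the sign condition $c-d_3(\l_0+\ep_3)>0$ from (\ref{d3-ine}) for the last one, with the constants fixed in the same order $\ep$'s first, then $M_1$, $M_2$, $M_3$. The only (harmless) cosmetic differences are that you bound $e^{(\l_0-\ep_1)x}$ and $e^{(\ep_2-\ep_3)x}$ by $1$ rather than evaluating at $x_i$, and you phrase the lower bound on $S_-$ as $\Sm/2$ instead of $\Sm(1-M_1M_2^{-\ep_1/\ep_2})$.
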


With the aid of the super-solutions and sub-solutions, we are now ready to define a convex set $\G$ as
\begin{equation}\label{G}
  \G:=\{(S,I,R)\in B_\mu(\R,\R^3):\ S_-\le S\le S_+~\&~I_-\le I\le I_+\&~R_-\le R\le R_+\}.
\end{equation}
Since $\mu>\l_0>0$, it is easily seen that $\G$ is uniformly bounded with respect to the norm $|\cdot|_\mu$ defined in (\ref{norm}).
To prove invariance of the convex set $\G$ under the map $F$, we shall make use of the following results which were also proved in \cite{WWW12}. For completeness, its proof can be found in Appendix.
\begin{lem}[\cite{WWW12}]\label{lem-D-D}
  Let $i=1,2,3$. We have
  \begin{align}\label{D-D}
    \D_i^{-1}(\D_ih)=h
  \end{align}
  for any $h\in C^2(\R)$ such that $h, h', h''\in C_{\mu^-,\mu^+}(\R)$ with $\mu^->\l_i^-$ and $\mu^+<\l_i^+$.
  Let
  $$g(x):=\max\{e^{\l x}(1-Me^{\ep x}), 0\}$$
  for some $M>0$, $\ep>0$ and $\l$ such that $\l_i^-<\l<\l+\ep<\l_i^+$, we have
  \begin{align}\label{D-D-ine}
    \D_i^{-1}(\D_ig)\ge g.
  \end{align}
  Here $\D_ig$ is understood as a piecewise defined function:
  \begin{align*}
    (\D_ig)(x)=\begin{cases}
      f_i(\l)e^{\l x}-Mf_i(\l+\ep)e^{(\l+\ep)x},\ & x<x^*:=-\ep^{-1}\ln M;\\
      0,\ & x>x^*:=-\ep^{-1}\ln M.
    \end{cases}
  \end{align*}
\end{lem}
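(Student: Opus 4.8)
\emph{Proof proposal.} The plan is to prove the two assertions separately. For the identity (\ref{D-D}) I would argue by a direct calculation: substitute $\D_ih=-d_ih''+ch'+\a_ih$ into the integral representation (\ref{Di-}) for $\D_i^{-1}$ and integrate by parts twice in each of the two integrals. The boundary contributions at $\pm\infty$ vanish precisely because $h,h',h''\in C_{\mu^-,\mu^+}(\R)$ with $\mu^->\l_i^-$ and $\mu^+<\l_i^+$ (so $e^{\l_i^-(x-y)}h^{(k)}(y)\to0$ as $y\to-\infty$ and $e^{\l_i^+(x-y)}h^{(k)}(y)\to0$ as $y\to+\infty$), the surviving integral terms carry the factor $f_i(\l_i^\pm)=0$ and drop out, and the boundary terms at $y=x$ collapse, using $\r_i=d_i(\l_i^+-\l_i^-)$, to exactly $h(x)$. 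Equivalently, one may use that $\D_i\D_i^{-1}=\mathrm{id}$ on $C_{\mu^-,\mu^+}(\R)$ — this is the computation already carried out in the proof of the preceding lemma — so that $w:=h-\D_i^{-1}(\D_ih)$ is a $C^2$ solution of $\D_iw=0$, hence $w=Ae^{\l_i^+x}+Be^{\l_i^-x}$, and the growth restrictions $\mu^+<\l_i^+$, $\mu^->\l_i^-$ force $A=B=0$.

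For the inequality (\ref{D-D-ine}) I would compute $w:=\D_i^{-1}(\D_ig)-g$ explicitly and show $w>0$. Write $x^*:=-\ep^{-1}\ln M$ and $\tilde g(x):=e^{\l x}-Me^{(\l+\ep)x}$, so that $g=\tilde g$ on $(-\infty,x^*]$, $g\equiv0$ on $[x^*,\infty)$, and $g$ is $C^2$ off $x^*$ with a corner there, $g'(x^{*-})=-\ep e^{\l x^*}<0=g'(x^{*+})$. Set $\phi:=\D_i^{-1}(\D_ig)$, with $\D_ig$ the piecewise-defined function in the statement. From (\ref{Di-'})--(\ref{Di-''}), valid at every point where $\D_ig$ is continuous, one gets $\phi\in C^1(\R)$ and $\D_i\phi=\D_ig$ classically on $\R\setminus\{x^*\}$. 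Since $\D_ig$ agrees with $\D_i\tilde g$ on $(-\infty,x^*)$ and vanishes on $(x^*,\infty)$, the function $w=\phi-g$ solves $\D_iw=0$ on each of the two half-lines; moreover, reading the decay of $\phi$ off (\ref{Di-}) (to the right of $x^*$ the integrals only involve the left piece of $\D_ig$, so $\phi$ is a multiple of $e^{\l_i^-x}$ there; to the left, $\phi-\tilde g$ is a multiple of $e^{\l_i^+x}$ by part one applied to $\tilde g$), we obtain $w=c_1e^{\l_i^+x}$ for $x\le x^*$ and $w=c_2e^{\l_i^-x}$ for $x\ge x^*$.

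It then remains to determine $c_1,c_2$ and check their sign. Continuity of $w$ at $x^*$ gives $c_1e^{\l_i^+x^*}=c_2e^{\l_i^-x^*}$, and, since $\phi\in C^1$, the jump of $w'=\phi'-g'$ at $x^*$ equals minus the jump of $g'$, namely $c_2\l_i^-e^{\l_i^-x^*}-c_1\l_i^+e^{\l_i^+x^*}=-\ep e^{\l x^*}$. Solving this $2\times2$ system yields $c_1=\ep e^{(\l-\l_i^+)x^*}/(\l_i^+-\l_i^-)>0$ and $c_2=c_1e^{(\l_i^+-\l_i^-)x^*}>0$, since $\l_i^+>\l_i^-$ by (\ref{ri}). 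Hence $w>0$ on all of $\R$, so $\D_i^{-1}(\D_ig)=g+w\ge g$.

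The one genuinely delicate point is the lack of smoothness of $g$, and of $\D_ig$, at $x^*$: one must keep track that $\phi=\D_i^{-1}(\D_ig)$ is only $C^1$ there, so that $\D_i\phi=\D_ig$ holds merely off $x^*$, and that the sign in (\ref{D-D-ine}) is produced entirely by the derivative jump coming from the corner of $g$ together with the positivity of the kernel $\r_i^{-1}e^{\l_i^\mp(x-y)}$ representing $\D_i^{-1}$. In distributional language this is transparent: $w$ solves $\D_iw=d_i\ep e^{\l x^*}\,\delta_{x^*}$, so $w=d_i\ep e^{\l x^*}\,\D_i^{-1}\delta_{x^*}>0$ because that kernel is strictly positive; the explicit matching above is simply the elementary way to make this rigorous.
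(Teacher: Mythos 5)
Your proof is correct, and while part one coincides with the paper's argument (integrate by parts twice in (\ref{Di-}), kill the surviving integrals with $f_i(\l_i^\pm)=0$, and collect $h(x)$ via $\r_i=d_i(\l_i^+-\l_i^-)$), your treatment of the inequality (\ref{D-D-ine}) takes a genuinely different route. The paper evaluates $\D_i^{-1}(\D_ig)$ head-on: it splits into the cases $x\le x^*$ and $x\ge x^*$, computes the auxiliary integrals $A(k)$ and $B(k)$ explicitly, and simplifies using $Me^{\ep x^*}=1$ until the nonnegative leftover term ${\ep\over\l_i^+-\l_i^-}e^{\l x^*+\l_i^\pm(x-x^*)}$ appears. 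You instead characterize the excess $w=\D_i^{-1}(\D_ig)-g$ structurally: it solves $\D_iw=0$ on each half-line, the admissible exponential on each side is pinned down by the decay visible in (\ref{Di-}) (and by part one applied to $\tilde g$ on the left), and the constants are fixed by continuity of $w$ together with the derivative jump $-\ep e^{\l x^*}$ inherited from the corner of $g$. Solving your $2\times2$ system gives exactly the paper's correction term, so the two computations agree; your version has the advantage of exposing \emph{why} the inequality holds (the piecewise definition of $\D_ig$ discards a negative multiple of $\delta_{x^*}$, and the Green's kernel of $\D_i$ is positive), at the cost of having to justify carefully that $\phi=\D_i^{-1}(\D_ig)$ is $C^1$ across $x^*$ and that no stray exponential survives on either half-line --- both of which you do address, the former via (\ref{Di-'}) and the latter via the explicit integral representations. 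Either argument is complete and rigorous.
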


Now we are ready to show that the convex set $\G$ defined in (\ref{G}) is invariant under the map $F=(F_1,F_2,F_3)$ defined in (\ref{F1}), (\ref{F2}) and (\ref{F3}). Its proof can be found in Appendix.
\begin{lem}\label{lem-invariant}
  The operator $F=(F_1,F_2,F_3)$ maps $\G$ into $\G$, namely, for any $(S,I,R)\in B_\mu(\R,\R^3)$ such that $S_-\le S\le S_+$, $I_-\le I\le I_+$ and $R_-\le R\le R_+$, we have
  $$S_-\le F_1(S,I,R)\le S_+,$$
  and
  $$I_-\le F_2(S,I,R)\le I_+,$$
  and
  $$R_-\le F_3(S,I,R)\le R_+.$$
\end{lem}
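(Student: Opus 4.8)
The plan is to verify the six sandwiching inequalities directly, relying on two structural facts about the operators $\D_i^{-1}$. First, each $\D_i^{-1}$ is \emph{order preserving}: $h\le\tilde h$ pointwise implies $\D_i^{-1}h\le\D_i^{-1}\tilde h$, which is immediate from the representation (\ref{Di-}) since both kernels $e^{\l_i^-(x-y)}$ and $e^{\l_i^+(x-y)}$ are positive. Second, the two parts of Lemma \ref{lem-D-D}: $\D_i^{-1}(\D_ih)=h$ for smooth $h$ in an admissible weighted space, and $\D_i^{-1}(\D_ig)\ge g$ for the truncated exponentials $g$ out of which the sub-solutions are built. Throughout, the ordering $\l_i^-<-\mu<\mu<\l_i^+$ together with $\l_0<\mu$ guarantees that every function fed into $\D_i^{-1}$ — the nonlinearity brackets $h_1,h_2,h_3$ in (\ref{F1})--(\ref{F3}), the super-solutions, and the images $\D_i(\text{sub-solution})$ — lies in some $C_{\mu^-,\mu^+}(\R)$ with $\mu^->\l_i^-$ and $\mu^+<\l_i^+$; in particular each $F_i$ is well defined on $\G$, and once the sandwiching is established $F(S,I,R)$ lies in $B_\mu(\R,\R^3)$ because it is squeezed between members of $B_\mu(\R,\R^3)$. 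I also use the convention that $u_1u_2/(u_1+u_2+u_3)$ is taken to be $0$ when $u_1+u_2+u_3=0$, consistent with its continuity on the nonnegative octant.

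\emph{Upper bounds.} Fix $(S,I,R)$ with $S_-\le S\le S_+$, $I_-\le I\le I_+$, $R_-\le R\le R_+$, and write $h_1,h_2,h_3$ for the three brackets in (\ref{F1})--(\ref{F3}). From $0\le S/(S+I+R)\le1$, the bounds $S\le S_+=\Sm$, $I\le I_+=e^{\l_0 x}$, $R\le R_+$, and the sizes $\a_2>\g+\d$, $\a_3>0$, one obtains
\begin{gather*}
  h_1\le\a_1S\le\a_1\Sm=\D_1S_+,\\
  h_2\le(\a_2+\b-\g-\d)I\le(\a_2+\b-\g-\d)e^{\l_0 x}=\D_2I_+,\\
  h_3\le\a_3R_++\g I_+=\D_3R_+,
\end{gather*}
where the middle identity uses that $\l_0$ solves $-d_2\l^2+c\l=\b-\g-\d$, and the last uses the definition (\ref{R+}) of $R_+$ (note that $c\l_0-d_3\l_0^2=\l_0(c-d_3\l_0)>0$ by (\ref{d3-ine})). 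Applying the order-preserving map $\D_i^{-1}$ and then the first identity of Lemma \ref{lem-D-D} — valid since these are smooth and lie in the relevant weighted spaces, as $0$ and $\l_0$ both lie strictly between $\l_i^-$ and $\l_i^+$ — gives $F_i(S,I,R)\le\D_i^{-1}(\D_i(\text{super-solution}))=\text{super-solution}$, i.e.\ $F_1\le S_+$, $F_2\le I_+$ and $F_3\le R_+$.

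\emph{Lower bounds.} Here the target is the pointwise inequality $\D_i(\text{sub-solution})\le h_i$ on $\R$, after which $\D_i^{-1}$ and the second part of Lemma \ref{lem-D-D} finish the job. On the half-line $x>x_i$ where the sub-solution is truncated to $0$ one has $\D_i(\text{sub-solution})=0$, while $h_i\ge0$ there, since $h_1=S\big(\a_1-\b I/(S+I+R)\big)\ge(\a_1-\b)S\ge0$, $h_2=(\a_2-\g-\d)I+\b SI/(S+I+R)\ge0$ and $h_3=\a_3R+\g I\ge0$, using $\a_1>\b$ and $\a_2>\g+\d$. On $x\le x_i$ the sub-solution equals its smooth exponential piece; for example $\D_1S_-=-d_1S_-''+cS_-'+\a_1S_-\le\a_1S_--\b I_+$ by (\ref{S-ine}), and likewise (\ref{I-ine}) gives $\D_2I_-\le(\a_2-\g-\d)I_-+\b S_-I_-/(S_-+I_++R_+)$ and (\ref{R-ine}) gives $\D_3R_-\le\a_3R_-+\g I_-$. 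For $h_1$ and $h_3$ the remaining domination is immediate: $h_1\ge\a_1S_--\b I_+$ from $\b SI/(S+I+R)\le\b I\le\b I_+$ and $S\ge S_-$, and $h_3\ge\a_3R_-+\g I_-$ from $R\ge R_-$ and $I\ge I_-$. For $h_2$, bound $(\a_2-\g-\d)I\ge(\a_2-\g-\d)I_-$ and use the chain
$$\frac{SI}{S+I+R}\ \ge\ \frac{S_-I}{S_-+I+R}\ \ge\ \frac{S_-I_-}{S_-+I_-+R}\ \ge\ \frac{S_-I_-}{S_-+I_++R_+},$$
where the first step holds because $s\mapsto s/(s+I+R)$ is nondecreasing, the second because $t\mapsto t/(S_-+t+R)$ is nondecreasing, and the third by enlarging the denominator via $I_-\le I_+$ and $R\le R_+$. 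Hence $\D_i(\text{sub-solution})\le h_i$ on all of $\R$, and therefore $\text{sub-solution}\le\D_i^{-1}(\D_i(\text{sub-solution}))\le\D_i^{-1}(h_i)=F_i(S,I,R)$, which are the three lower bounds.

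The only genuinely non-mechanical step is this last chain for $F_2$: one cannot compare the numerator and denominator of $SI/(S+I+R)$ with those of $S_-I_-/(S_-+I_++R_+)$ separately, because $S$ may be as large as $\Sm\gg S_-$, so the estimate must be routed through $S_-$ first and only then may the denominator be enlarged. Making this align with the precise right-hand side of (\ref{I-ine}) is exactly what forces the choices $\a_1>\b$, $\a_2>\g+\d$ and the use of $R\le R_+$ rather than $R\le R_-$. Everything else reduces to positivity of the kernels in (\ref{Di-}), the two identities of Lemma \ref{lem-D-D}, and the pointwise estimates already furnished by Lemma \ref{lem-sub-ine}.
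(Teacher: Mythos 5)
Your proof is correct and takes essentially the same approach as the paper's: upper bounds by dominating each bracket $h_i$ with $\D_i(\text{super-solution})$ and applying $\D_i^{-1}(\D_ih)=h$, and lower bounds by the case split at $x_i$, combining Lemma \ref{lem-sub-ine} on $x\le x_i$ with the choices $\a_1>\b$, $\a_2>\g+\d$ on $x\ge x_i$, followed by $\D_i^{-1}(\D_ig)\ge g$ and the positivity of the kernels in (\ref{Di-}). The only (welcome) difference is that you make explicit the three-step monotonicity chain behind $\b SI/(S+I+R)\ge\b S_-I_-/(S_-+I_++R_+)$, which the paper compresses into the phrase ``monotonicity of $\b SI/(S+I+R)$ in $S$'' even though monotonicity in $S$ alone does not suffice there.
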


Before applying Schauder fixed point theorem, we shall verify that $F$ is continuous and compact on $\G$ with respect to the norm $|\cdot|_\mu$ defined in (\ref{norm}). The proof is standard and can be found in Appendix.
\begin{lem}\label{lem-cc}
  The map $F=(F_1,F_2,F_3):\G\to\G$ defined in (\ref{F1}), (\ref{F2}) and (\ref{F3}) is continuous and compact with respect to the norm $|\cdot|_\mu$ defined in (\ref{norm}).
\end{lem}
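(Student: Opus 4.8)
The plan is to establish continuity and compactness of $F$ on $\G$ by exploiting the explicit integral representation (\ref{Di-}) of $\D_i^{-1}$ together with the fact that on $\G$ the nonlinear terms are uniformly bounded and Lipschitz. First I would observe that for $(S,I,R)\in\G$ all three components are trapped between the sub- and super-solutions, so $0\le S\le\Sm$, $0\le I\le e^{\l_0 x}$, and $0\le R\le\frac{\g}{c\l_0-d_3\l_0^2}e^{\l_0 x}$; in particular, wherever $I>0$ we have $S+I+R\ge I>0$, so the incidence term $\b S I/(S+I+R)$ is well defined, bounded by $\b I$, and (being $\b I$ times a function of the ratios) is a globally Lipschitz function of $(S,I,R)$ on the relevant range, with the degeneracy at $I=0$ controlled because the whole term is dominated by $\b I\to 0$. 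Hence each argument $h_i$ of $\D_i^{-1}$ in (\ref{F1})--(\ref{F3}) lies in $C_{-\mu,\mu}(\R)$ with a norm bounded uniformly over $\G$, and depends Lipschitz-continuously (in $|\cdot|_\mu$) on $(S,I,R)\in\G$.

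Next I would prove continuity: given $(S,I,R),(\bar S,\bar I,\bar R)\in\G$, the difference $h_i-\bar h_i$ satisfies $e^{-\mu|x|}|h_i(x)-\bar h_i(x)|\le L|(S,I,R)-(\bar S,\bar I,\bar R)|_\mu$ for a constant $L$ depending only on $\b,\g,\d,\a_i$; then from (\ref{Di-}),
\begin{align*}
  e^{-\mu|x|}|(\D_i^{-1}(h_i-\bar h_i))(x)|
  &\le {1\over\r_i}e^{-\mu|x|}\int_{-\infty}^x e^{\l_i^-(x-y)}|h_i(y)-\bar h_i(y)|\,dy\\
  &\quad+{1\over\r_i}e^{-\mu|x|}\int_x^\infty e^{\l_i^+(x-y)}|h_i(y)-\bar h_i(y)|\,dy,
\end{align*}
and since $\l_i^-<-\mu<\mu<\l_i^+$ the kernels $e^{\l_i^-(x-y)}$ (for $y<x$) and $e^{\l_i^+(x-y)}$ (for $y>x$) integrate against $e^{\mu|y|}$ to give a bound of the form $C(\l_i^\pm,\mu)$ uniformly in $x$, so $|F_i(S,I,R)-F_i(\bar S,\bar I,\bar R)|_\mu\le C L|(S,I,R)-(\bar S,\bar I,\bar R)|_\mu$. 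This gives Lipschitz continuity of $F$, hence continuity.

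For compactness I would use the standard Arzel\`a--Ascoli argument adapted to the weighted space. From (\ref{Di-'}) and the uniform bound on the $h_i$, the derivatives $(\D_i^{-1}h_i)'$ are uniformly bounded on every compact interval (in fact $e^{-\mu|x|}|(\D_i^{-1}h_i)'(x)|$ is uniformly bounded on $\R$ by the same kernel estimate with an extra factor $\max(|\l_i^-|,\l_i^+)$), so $F(\G)$ is equi-Lipschitz on compacts; combined with the uniform bound on $F(\G)$ in $|\cdot|_\mu$ this yields, via a diagonal subsequence over $[-n,n]$, a subsequence converging uniformly on compacts. To upgrade this to convergence in $|\cdot|_\mu$ one uses that every element of $F(\G)\subset\G$ is squeezed between $S_\pm,I_\pm,R_\pm$, which decay (in the weighted norm, since $\mu>\l_0$) as $x\to-\infty$ and, for the $I$ and $R$ components, the super-solutions $e^{\l_0 x}$, $\frac{\g}{c\l_0-d_3\l_0^2}e^{\l_0 x}$ force $e^{-\mu|x|}|\cdot|\to 0$ as $x\to+\infty$ as well, while the $S$ component lives in $[0,\Sm]$ so $e^{-\mu|x|}S\to 0$ as $|x|\to\infty$; thus the tails are uniformly small and uniform-on-compacts convergence implies $|\cdot|_\mu$ convergence. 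I expect the main obstacle to be handling the incidence nonlinearity cleanly near $I=0$ (and near the support boundaries of the sub-solutions, where $S_-,I_-,R_-$ are only Lipschitz, not $C^2$): the key point to get right is that $\b S I/(S+I+R)$ extends to a Lipschitz function on the closed region cut out by $\G$ with the value $0$ assigned when $I=0$, which is legitimate precisely because of the bound $|\b S I/(S+I+R)|\le\b I$ and an elementary estimate on how the ratio changes under perturbations of $(S,I,R)$ — everything else is the routine kernel bookkeeping sketched above.
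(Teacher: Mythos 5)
Your proposal is correct and follows essentially the same route as the paper: Lipschitz continuity of the incidence term (via its bounded partial derivatives, equivalently the domination $\b SI/(S+I+R)\le\b I$) combined with the kernel estimate coming from $\l_i^-<-\mu<\mu<\l_i^+$ gives continuity, and Arzel\`a--Ascoli with a diagonal argument on $[-k,k]$, upgraded to $|\cdot|_\mu$-convergence by uniform smallness of the weighted tails, gives compactness. No gaps.
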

The following proposition gives the first part of our main theorem.
\begin{prop}
  The map $F$ has a fixed point $(S,I,R)\in\G$ which satisfies the equations (\ref{S-ODE}-\ref{R-ODE}).
  As $x\to-\infty$, we have
  \begin{align}\label{asymp-}
  &S(x)\to S_{-\infty},~I(x)\sim e^{\l_0 x},\ R(x)\sim{\g e^{\l_0x}\over c\l_0-d_3\l_0^2},\nonumber\\ &S'(x),~ I'(x),~ R'(x),~S''(x),~ I''(x),~R''(x)\to0.
  \end{align}
  As $x\to\infty$, we have
  \begin{align}\label{asymp+}
  &S(x)\to S_\infty<S_{-\infty},~I(x)\to0,~R(x)\to{\g(\Sm-\Sp)\over\g+\d},\nonumber\\ &S'(x),~I'(x),~R'(x),~S''(x),~I''(x),~R''(x)\to0.
  \end{align}
  Moreover, $S(x)$ is decreasing, $0\le I(x)\le\Sm-\Sp$ for $x\in\R$, $R(x)$ is increasing, and
  \begin{align}
    \int_{-\infty}^\infty(\g+\d) I(x)dx=\int_{-\infty}^\infty {\b S(x)I(x)\over S(x)+I(x)+R(x)}dx=c(\Sm-\Sp).
  \end{align}
\end{prop}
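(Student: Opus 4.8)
The plan is to produce the fixed point from Schauder's theorem, extract the behavior near $-\infty$ directly from the barriers (\ref{S+})--(\ref{R-}), and --- the substantive part --- determine the behavior near $+\infty$ by integrating the wave equations and re-expressing $I$ through the resolvent of its own linearization. First I would check that $\G$ in (\ref{G}) is a nonempty, closed, bounded, convex subset of $B_\mu(\R,\R^3)$; then Lemmas~\ref{lem-invariant} and~\ref{lem-cc} give $F(\G)\subseteq\G$ and continuity and compactness of $F$ on $\G$, so Schauder's theorem yields a fixed point $(S,I,R)\in\G$, which by the earlier lemma relating fixed points of $F$ to (\ref{S-ODE})--(\ref{R-ODE}) solves those equations. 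Writing $g:=\b SI/(S+I+R)$, the identities $S=\D_1^{-1}[\a_1S-g]$, $I=\D_2^{-1}[(\a_2-\g-\d)I+g]$, $R=\D_3^{-1}[\a_3R+\g I]$, together with $\a_1>\b$, $\a_2>\g+\d$, positivity of the kernels in (\ref{Di-}) and strict positivity of $S_-,I_-,R_-$ near $-\infty$, force $S,I,R>0$ on $\R$, so $g$ is well defined and $0<g\le\b I\le\b S\le\b\Sm$; bootstrapping (\ref{Di-})--(\ref{Di-''}) then gives $(S,I,R)\in C^2(\R)$ with $S',S'',I',I'',R',R''$ bounded. Since $(S,I,R)\in\G$, the barriers squeeze $S\to\Sm$, $I(x)e^{-\l_0x}\to1$ and $R(x)e^{-\l_0x}\to\g/(c\l_0-d_3\l_0^2)$ as $x\to-\infty$, the last denominator being $\l_0(c-d_3\l_0)>0$ by (\ref{d3-ine}); the functions inside the $\D_i^{-1}$ then tend to the constants $\a_1\Sm,0,0$, and splitting each into its limit plus a remainder decaying at $-\infty$ and using (\ref{Di-'})--(\ref{Di-''}) yields $S',S'',I',I'',R',R''\to0$ as $x\to-\infty$, i.e. (\ref{asymp-}).

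Next I would establish monotonicity. From (\ref{S-ODE}), $(e^{-cx/d_1}S')'=\tfrac{1}{d_1}e^{-cx/d_1}g>0$, so $e^{-cx/d_1}S'$ is strictly increasing; were its limit at $+\infty$ positive, $S'$ would grow exponentially and $S$ would exceed $\Sm$, a contradiction, so $e^{-cx/d_1}S'<0$ and $S$ strictly decreases to some $\Sp\in[0,\Sm)$; being monotone and bounded, $S'\in L^1(\R)$, and with $S''$ bounded this gives $S'(\pm\infty)=0$. Likewise (\ref{R-ODE}) gives $(e^{-cx/d_3}R')'=-\tfrac{\g}{d_3}e^{-cx/d_3}I<0$, so $e^{-cx/d_3}R'$ is strictly decreasing with nonnegative limit at $+\infty$ (else $R\to-\infty$), hence $R'>0$ and $R$ strictly increases. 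Integrating (\ref{S-ODE}) over $\R$ with $S'(\pm\infty)=0$ then yields $\int_{\R}g\,dx=c(\Sm-\Sp)=:G<\infty$.

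The heart of the argument is the behavior near $+\infty$. Here I would rewrite (\ref{I-ODE}) as $-d_2I''+cI'+(\g+\d)I=g$; the symbol $-d_2\l^2+c\l+(\g+\d)$ has roots $\nu_-<0<\nu_+$ with $\l_0<c/(2d_2)<\nu_+$, so --- since $I$ vanishes at $-\infty$, $0<I\le I_+=e^{\l_0x}$ and $\l_0<\nu_+$ --- both homogeneous modes are excluded and the variation-of-parameters solution is forced:
$$I(x)=\frac1{\r_*}\int_{-\infty}^x e^{\nu_-(x-y)}g(y)\,dy+\frac1{\r_*}\int_x^\infty e^{\nu_+(x-y)}g(y)\,dy,\qquad \r_*:=\sqrt{c^2+4d_2(\g+\d)}.$$
Since $e^{\nu_\pm(x-y)}\le1$ on the respective ranges, this already gives $0<I(x)\le G/\r_*\le\Sm-\Sp$ for all $x$; a tail estimate (splitting the first integral at a fixed cutoff, using $g\in L^1$) shows $I(x)\to0$ as $x\to+\infty$, and differentiating the representation --- the boundary terms cancel --- gives $I'(\pm\infty)=0$. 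Then $g\le\b I\to0$, so $S'',I''\to0$ at $\pm\infty$. Integrating (\ref{I-ODE}) over $\R$ with $I(\pm\infty)=I'(\pm\infty)=0$ gives $(\g+\d)\int_{\R}I\,dx=G$, so $I\in L^1(\R)$ and
$$\int_{-\infty}^\infty(\g+\d)I(x)\,dx=\int_{-\infty}^\infty g(x)\,dx=c(\Sm-\Sp).$$
Finally, with $J(x):=\int_{-\infty}^x I$, integrating (\ref{R-ODE}) over $(-\infty,x)$ gives the first-order equation $d_3R'-cR=-\g J$; solving it and discarding the $e^{cx/d_3}$ mode --- ruled out by $R\le R_+=O(e^{\l_0x})$ together with $c-d_3\l_0>0$ --- gives $R(x)=\tfrac{\g}{d_3}\int_0^\infty e^{-cs/d_3}J(x+s)\,ds$, whence $R(x)\to\tfrac{\g}{c}\norm{I}_{L^1}=\g(\Sm-\Sp)/(\g+\d)$ and $R'(\infty)=\tfrac{c}{d_3}R(\infty)-\tfrac{\g}{d_3}\norm{I}_{L^1}=0$, $R''(\infty)=0$. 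Combined with $\Sp<\Sm$ (immediate from strict monotonicity), this gives (\ref{asymp+}), the monotonicity claims, the bound $0\le I\le\Sm-\Sp$, and the integral identity.

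The main obstacle is this last paragraph: near $+\infty$ the upper barrier $I_+$ blows up, so neither $I(+\infty)=0$ nor $I\in L^1(\R)$ is visible from $\G$, and both must be manufactured from the single quantitative input $g\in L^1(\R)$ --- first by turning the $I$-equation into a convolution against the Green's function of its linearization, then by a first-order ODE analysis for $R$ that depends essentially on the structural inequality $c>d_3\l_0$ coming from hypothesis (\ref{d3}).
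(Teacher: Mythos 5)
Your proposal is correct and follows the same overall route as the paper: Schauder on the invariant set $\G$, the squeeze from the barriers at $-\infty$, integration of the $S$- and $R$-equations, and the representation of $I$ as the resolvent of $-d_2(\cdot)''+c(\cdot)'+(\g+\d)(\cdot)$ applied to $\b SI/(S+I+R)$, with the homogeneous modes excluded by $I(-\infty)=0$ and $I\le e^{\l_0 x}$ with $\l_0<\l^+$. The one place where you genuinely diverge is the bound $I\le\Sm-\Sp$: you read it off directly from the resolvent formula via $e^{\l^\pm(x-y)}\le1$ on the respective ranges and $\r=\sqrt{c^2+4d_2(\g+\d)}\ge c$, so that $I\le\frac{1}{\r}\int_\R \b SI/(S+I+R)\,dx=c(\Sm-\Sp)/\r\le\Sm-\Sp$. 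The paper instead introduces the auxiliary function $J(x)=I(x)+\frac{\g+\d}{c}\int_{-\infty}^xI+\frac{\g+\d}{c}\int_x^\infty e^{(c/d_2)(x-y)}I(y)\,dy$, shows $-d_2J''+cJ'=\b SI/(S+I+R)$, deduces $J'>0$ and $J(\infty)=\Sm-\Sp$, and concludes from $I\le J$. Your shortcut is simpler and even gives the strict inequality; the paper's $J$-device is heavier but exhibits the monotone conserved-flux structure of the $I$-equation. Two smaller, harmless variations: you prove monotonicity of $S$ before integrability of $\b SI/(S+I+R)$ (the paper does the reverse), and you get $I(\infty)=0$ by a tail estimate on the convolution rather than the paper's ``$I\in L^1$, $I\ge0$, $I'$ bounded'' argument. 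One caveat: your early claim that bootstrapping gives $I'$, $I''$, $R'$, $R''$ \emph{bounded} on all of $\R$ is not justified at that stage (the inputs to $\D_2^{-1}$ and $\D_3^{-1}$ are only $O(e^{\l_0x})$, so (\ref{Di-'}) gives growth, not boundedness, at $+\infty$); but you never actually use those bounds except for $S'$, $S''$, where they do hold, so the proof is unaffected.
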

\begin{proof}
The existence of fixed point follows from Lemma \ref{lem-invariant}, Lemma \ref{lem-cc} and Schauder fixed point theorem.
Namely, there exists $(S,I,R)\in B_\mu(R,\R^3)$ such that
\begin{align}
  S&=F_1(S,I,R)=\D_1^{-1}[\a_1S-\b SI/(S+I+R)];\label{S-int}\\
  I&=F_2(S,I,R)=\D_2^{-1}[\a_2I+\b SI/(S+I+R)-(\g+\d) I];\label{I-int}\\
  R&=F_3(S,I,R)=\D_3^{-1}[\a_3R+\g I].\label{R-int}
\end{align}
Since $S,I,R\in C_{-\mu,\mu}(\R)$ and $\l_i^-<-\mu<\mu<\l_i^+$ for any $i=1,2,3$, it is readily seen from (\ref{D-D}) in Lemma \ref{lem-D-D} that
\begin{align*}
  \D_1S&=\a_1S-\b SI/(S+I+R);\\
  \D_2I&=\a_2I+\b SI/(S+I+R)-(\g+\d) I;\\
  \D_3R&=\a_3R+\g I.
\end{align*}
Recalling the definition of $\D_i$ (with $i=1,2,3$) in (\ref{Di}), we conclude that $(S,I,R)$ satisfies the equations (\ref{S-ODE}), (\ref{I-ODE}) and (\ref{R-ODE}).
Since $S_-\le S\le S_+$, $I_-\le I\le I_+$ and $R_-\le R\le R_+$, we obtain from the definitions of $S_\pm$, $I_\pm$ and $R_\pm$ in (\ref{S+}-\ref{R-}) and the squeeze theorem that $S(x)\to S_{-\infty}$, $I(x)\sim e^{\l_0x}$ and $R(x)\sim {\g}e^{\l_0x}/(c\l_0-d_3\l_0^2)$ as $x\to-\infty$.
Furthermore, recall the integral representation (\ref{Di-'}) for the first derivative of $\D_i^{-1}h$:
$$(\D_i^{-1}h)'(x)={\l_i^-\over\r_i}\int_{-\infty}^xe^{\l_i^-(x-y)}h(y)dy+{\l_i^+\over\r_i}\int_x^\infty e^{\l_i^+(x-y)}h(y)dy$$
for any $h\in C_{-\mu,\mu}(\R)$.
We obtain from (\ref{S-int}), (\ref{I-int}), (\ref{R-int}) and L'H\^opital's rule that
$S'(x)\to0$, $I'(x)\to0$ and $R'(x)\to0$ as $x\to-\infty$.
Finally, from (\ref{S-ODE}), (\ref{I-ODE}) and (\ref{R-ODE}), it follows that the second derivatives $S''$, $I''$ and $R''$ also vanish at $-\infty$.
This gives (\ref{asymp-}).

Now we investigate asymptotic behavior of $S$, $I$ and $R$ as $x\to\infty$.
An integration of (\ref{S-ODE}) from $-\infty$ to $x$ gives
$$d_1S'(x)=c[S(x)-S_{-\infty}]+\int_{-\infty}^x {\b S(y)I(y)\over S(y)+I(y)+R(y)}dy.$$
Since $S(x)$ is uniformly bounded, the integral on the right-hand side should be uniformly bounded; otherwise $S'(x)\to\infty$ as $x\to\infty$, which implies $S(x)\to\infty$ as $x\to\infty$, a contradiction.
Thus, we obtain integrability of $\b SI/(S+I+R)$ on $\R$, which together with the above equality implies that $S'$ is uniformly bounded on $\R$.
Note from (\ref{S-ODE}) that
$$(e^{-cx/d_1}S')'=e^{-cx/d_1}(S''-cS'/d_1)=e^{-cx/d_1}\b SI/(S+I+R)/d_1.$$
Integrating the above equality from $x$ to infinity gives
$$e^{-cx/d_1}S'(x)=-\int_x^\infty e^{-cy/d_1}{\b S(y)I(y)\over d_1[S(y)+I(y)+R(y)]}dy.$$
Hence, $S$ is non-increasing. Furthermore, since $S$ and $I$ are non-trivial; see (\ref{asymp-}), the integral on the right-hand side of the above equality can not be identically zero, which implies $S'(x)<0$ and $S(\infty)=S_\infty<S_{-\infty}$.
We are now ready to study asymptotic behavior of $I(x)$ as $x\to\infty$.
From (\ref{I-ODE}), $I(-\infty)=0$ and $I(x)\le I_+(x)=e^{\l_0x}$, we have
\begin{equation}\label{I-int'}
  I(x)={1\over\r}\int_{-\infty}^xe^{\l^-(x-y)}{\b S(y)I(y)\over S(y)+I(y)+R(y)}dy+{1\over\r}\int_x^\infty e^{\l^+(x-y)}{\b S(y)I(y)\over S(y)+I(y)+R(y)}dy,
\end{equation}
where
$$\l^\pm:={c\pm\sqrt{c^2+4d_2(\g+\d)}\over2d_2}$$
and
$$\r:=d_2(\l^+-\l^-)=\sqrt{c^2+4d_2(\g+\d)}.$$
Remark that $\l^-<0<\l_0<\l^+$ and $\l^\pm$ are the two roots of following equation
$$-d_2\l^2+c\l+\g+\d=0.$$
We would also like to mention that the integral in (\ref{I-int'}) is well defined because of Lebesgue's dominated convergence theorem and uniform boundedness of $\b SI/(S+I+R)$.
Since $\b SI/(S+I+R)$ is integrable on $\R$, it follows from the integral equation (\ref{I-int'}) and Fubini's theorem that $I$ is also integrable on $\R$, and
\begin{equation}\label{I-integral}
  \int_{-\infty}^\infty I(x)dx={1\over\g+\d}\int_{-\infty}^\infty {\b S(x)I(x)\over S(x)+I(x)+R(x)}dx.
\end{equation}
Furthermore, since
$$I'(x)={\l^-\over\r}\int_{-\infty}^xe^{\l^-(x-y)}{\b S(y)I(y)\over S(y)+I(y)+R(y)}dy+{\l^+\over\r}\int_x^\infty e^{\l^+(x-y)}{\b S(y)I(y)\over S(y)+I(y)+R(y)}dy,$$
we have from $\l^-<0<\l^+$, $\b SI/(S+I+R)\le\b I$ and $\r=d_2(\l^+-\l_-)$ that
$$|I'(x)|\le{\b\over d_2}\int_{-\infty}^\infty I(x)dx.$$
Since $I'$ is uniformly bounded and $I\ge0$ is integrable on $\R$, it is easily seen that $I(x)\to0$ as $x\to\infty$; otherwise, we can find a number $\ep>0$, a sequence $x_n\to\infty$ and a number $\k>0$ (since $I'$ is uniformly bounded) such that
$I(x)>\ep$ for all $|x-x_n|<\k$, which contradicts the integrability of $I$ on $\R$.
By integrating (\ref{I-ODE}) on the real line, it then follows from (\ref{asymp-}) and (\ref{I-integral}) that
$I'(x)\to0$ as $x\to\infty$ (noting that this can be also obtained from the integral representation of $I'$ and L'H\^opital's rule). Again, from (\ref{I-ODE}) we obtain $I''(x)\to0$ as $x\to\infty$.
Since $\b SI/(S+I+R)$ is integrable on the real line, it is readily seen from (\ref{S-ODE}) and (\ref{asymp-}) that $S'$ is uniformly bounded, which in turn implies $S''$ is also uniformly bounded. Since $S'\le0$ is integrable on $\R$, it can be shown that $S'(x)\to0$ as $x\to\infty$.
This, together with (\ref{S-ODE}) gives $S''(x)\to0$ as $x\to\infty$.
Moreover, an integration of (\ref{S-ODE}) on the real line yields
\begin{equation}\label{S-integral}
  \int_{-\infty}^\infty {\b S(x)I(x)\over S(x)+I(x)+R(x)}dx=c(\Sm-\Sp).
\end{equation}
Solving the linear equation (\ref{R-ODE}) gives
$$R(x)={\g\over c}\int_0^x I(y)dy+{\g\over c}\int_x^0 e^{(c/d_3)(x-y)}I(y)dy+C_0+C_1e^{(c/d_3)x},$$
where $C_0$ and $C_1$ are constants of integration. Substituting $x$ by $-\infty$, we obtain from $R(-\infty)=0$ that
$$C_0={\g\over c}\int_{-\infty}^0 I(y)dy.$$
Furthermore, since $R(x)\le R_+(x)=\g e^{\l_0x}/(c\l_0-d_3\l_0^2)$ and $\l_0<c/d_3$, we have $e^{-(c/d_3)x}R(x)\to0$ as $x\to\infty$.
Hence, it is readily seen that
$$C_1={\g\over c}\int_0^\infty e^{-(c/d_3)y}I(y)dy.$$
Therefore, we obtain
\begin{align}\label{R-integral}
  R(x)={\g\over c}\int_{-\infty}^x I(y)dy+{\g\over c}\int_x^\infty e^{(c/d_3)(x-y)}I(y)dy.
\end{align}
It follows from (\ref{I-integral}), (\ref{asymp+}), (\ref{S-integral}) and L'H\^opital's rule that
$$\lim_{x\to\infty}R(x)={\g\over c}\int_{-\infty}^\infty I(x)dx={\g\over\g+\d}(\Sm-\Sp).$$
Moreover, differentiating (\ref{R-integral}) once yields
$$R'(x)={\g\over d_3}\int_x^\infty e^{(c/d_3)(x-y)}I(y)dy>0.$$
Note that $I(\infty)=0$, we obtain from L'H\^opital's rule that
$$\lim_{x\to\infty}R'(x)=0.$$
Consequently, it follows from (\ref{R-ODE}) and $I(\infty)=0$ that $R''(x)\to0$ as $x\to\infty$. This proves (\ref{asymp+}).

Finally, we intend to prove the inequality $I(x)\le \Sm-\Sp$ for all $x\in\R$.
Since $I(x)\sim e^{\l_0x}$ as $x\to-\infty$ and $I(x)\to0$ as $x\to\infty$, we can define
\begin{equation}\label{J}
  J(x):=I(x)+{\g+\d\over c}\int_{-\infty}^x I(y)dy+{\g+\d\over c}\int_x^\infty e^{(c/d_2)(x-y)}I(y)dy.
\end{equation}
It follows from (\ref{asymp-}), (\ref{asymp+}), (\ref{I-integral}), (\ref{S-integral}) and L'H\^opital's rule that
$$\lim_{x\to-\infty}J(x)=0,~\lim_{x\to\infty}J(x)={\g+\d\over c}\int_{-\infty}^\infty I(x)dx=\Sm-\Sp.$$
Similarly, by differentiating (\ref{J}) once, we obtain from the asymptotic formulas (\ref{asymp-}-\ref{asymp+}) and L'H\^opital's rule that
$$J'(x)=I'(x)+{\g+\d\over d_2}\int_x^\infty e^{(c/d_2)(x-y)}I(y)dy$$
and
$$\lim_{x\to-\infty}J'(x)=0,~\lim_{x\to\infty}J'(x)=0.$$
Furthermore, by differentiating (\ref{J}) twice, it is readily seen from the differential equation for $I$ in (\ref{I-ODE}) that
$$-d_2J''+cJ'=-d_2I''+cI'+(\g+\d) I=\b SI/(S+I+R).$$
An integration of the above equation from $x$ to $\infty$ gives
$$J'(x)={1\over d_2}\int_x^\infty e^{(c/d_2)(x-y)}{\b S(y)I(y)\over S(y)+I(y)+R(y)}dy>0.$$
Here we have used the fact that $J'(\infty)=0$.
Since $J(\infty)=\Sm-\Sp$, we obtain from the above inequality that $J(x)\le \Sm-\Sp$ for all $x\in\R$. Since $I(x)\le J(x)$ by definition (\ref{J}), it follows that $I(x)\le \Sm-\Sp$ for all $x\in\R$.
This ends the proof.
\end{proof}

\section{Non-existence of traveling wave solution}
It is easily seen that the traveling wave solution $(S,I,R)$ (if exists) of (\ref{S-ODE}-\ref{R-ODE}) satisfies the following integral equation (noting that $I(\pm\infty)=0$)
\begin{equation}\label{I-int2}
  I(x)={1\over\r}\int_{-\infty}^xe^{\l^-(x-y)}{\b S(y)I(y)\over S(y)+I(y)+R(y)}dy+{1\over\r}\int_x^\infty e^{\l^+(x-y)}{\b S(y)I(y)\over S(y)+I(y)+R(y)}dy,
\end{equation}
where
$$\l^\pm:={c\pm\sqrt{c^2+4d_2(\g+\d)}\over2d_2}$$
and
$$\r:=d_2(\l^+-\l^-)=\sqrt{c^2+4d_2(\g+\d)}.$$
Remark that $\l^-<0<\l^+$ and $\l^\pm$ are the two roots of following equation
$$-d_2\l^2+c\l+\g+\d=0.$$
Note that the integral in (\ref{I-int2}) is well defined because $\b SI/(S+I+R)$ vanishes at infinity.
By (\ref{I-int2}), the derivative of $I$ has the following integral representation:
\begin{equation*}\label{I'-int2}
  I'(x)={\l^-\over\r}\int_{-\infty}^xe^{\l^-(x-y)}{\b S(y)I(y)\over S(y)+I(y)+R(y)}dy+{\l^+\over\r}\int_x^\infty e^{\l^+(x-y)}{\b S(y)I(y)\over S(y)+I(y)+R(y)}dy.
\end{equation*}
An application of L'H\^opital's rule to the above equation yields
$I'(\pm\infty)=0$.
Applying this to (\ref{I-ODE}) gives $I''(\pm\infty)=0$.
We list the asymptotic behavior of $I$ as below.
\begin{equation}\label{I-asymp}
  I(\pm\infty)=0,~ I'(\pm\infty)=0,~ I''(\pm\infty)=0.
\end{equation}
The following two propositions give the second statement in our main theorem.
\begin{prop}
  If $R_0:=\b/(\g+\d)>1$ and $c<c^*:=2\sqrt{d_2(\b-\g-\d)}$, then there does not exist a non-trivial and non-negative traveling wave solution of (\ref{S-ODE}), (\ref{I-ODE}) and (\ref{R-ODE}) such that
  $S(-\infty)=\Sm$, $S(\infty)<\Sm$, $I(\pm\infty)=0$ and $R(-\infty)=0$.
\end{prop}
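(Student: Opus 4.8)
The plan is to argue by contradiction: suppose such a traveling wave exists for some $c < c^*$ with $R_0 > 1$. The strategy is a two-sided Laplace transform applied to the integral equation \eqref{I-int2} for $I$. First I would establish the exponential decay of $I$ at $-\infty$. Since $S(-\infty) = \Sm$ and $R(-\infty) = 0$, the ratio $\b S/(S+I+R) \to \b$ as $x \to -\infty$, so \eqref{I-int2} says $I$ behaves like a solution of a linear equation whose characteristic function is $f(\l) = -d_2\l^2 + c\l - (\b-\g-\d)$. The crucial point is that when $c < c^*$, the discriminant $c^2 - 4d_2(\b-\g-\d)$ is negative, so $f(\l)$ has \emph{no real roots} and $f(\l) < 0$ for all real $\l$. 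I would use \eqref{I-int2}, the positivity of $I$, and the fact that $\b SI/(S+I+R) \ge (\b - \ep) I$ for $x$ sufficiently negative (any small $\ep>0$), together with a comparison/iteration argument on the integral operator, to show that $I(x) e^{-\mu x}$ is bounded near $-\infty$ for \emph{some} $\mu > 0$; in fact one should be able to push this to show $\limsup_{x\to-\infty} x^{-1}\ln I(x)$ is a finite positive number, or at least that $I$ admits a nontrivial two-sided Laplace transform on a strip $0 < \mathrm{Re}\,\l < \mu_0$ for some $\mu_0 > 0$.

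Next I would take the two-sided Laplace transform of the differential equation \eqref{I-ODE}, or equivalently multiply \eqref{I-int2} by $e^{-\l x}$ and integrate over $\R$. Using the asymptotics \eqref{I-asymp} to kill boundary terms, integration by parts gives
\[
f(\l)\,\widehat{I}(\l) = \int_{-\infty}^\infty e^{-\l x}\left[\b - \frac{\b S(x)}{S(x)+I(x)+R(x)}\right] I(x)\,dx,
\]
where $\widehat{I}(\l) = \int_{-\infty}^\infty e^{-\l x} I(x)\,dx$ and $f(\l) = -d_2\l^2 + c\l - (\b-\g-\d)$. The bracket on the right is nonnegative (since $S \le S+I+R$) and $I > 0$, so for real $\l$ in the convergence strip the right-hand side is $\ge 0$, while $\widehat{I}(\l) > 0$. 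But $f(\l) < 0$ for \emph{all} real $\l$ when $c < c^*$. This forces both sides to be zero, hence the bracket times $I$ must vanish identically, i.e. $I(x) = 0$ (using $S+I+R > 0$ wherever $S > \Sm$... ) — more carefully, it forces $\b S/(S+I+R) = \b$ a.e. on the support of $I$, i.e. $I + R \equiv 0$ there, contradicting $I$ nontrivial (or, if one prefers, contradicting $R \ge 0$ with $R$ nontrivial). Either way we reach a contradiction with the assumption that $(S,I,R)$ is a nontrivial nonnegative wave.

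The main obstacle I anticipate is the first step: justifying that $\widehat{I}(\l)$ actually converges on a strip $0 < \mathrm{Re}\,\l < \mu_0$ with $\mu_0 > 0$, i.e. that $I$ does not decay too slowly — or too fast — at $-\infty$ for the Laplace machinery to apply. The subtlety is that a priori $I$ could decay slower than any exponential, in which case $\widehat I(\l)$ diverges for every $\l > 0$. To handle this I would run a bootstrap on the renewal-type equation \eqref{I-int2}: near $-\infty$, $I(x) \le C\int_{-\infty}^x e^{\l^-(x-y)} I(y)\,dy + C\int_x^\infty e^{\l^+(x-y)} I(y)\,dy$ up to the correction term, and since $\l^- < 0 < \l^+$ one can show inductively that $I(x) \le C_k e^{k\eta x}$ for a small fixed $\eta > 0$ and all $k$, until the growth saturates at the abscissa of convergence; standard in this literature (cf. the treatment in \cite{WWW12}). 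Here the hypothesis $R_0 > 1$ is what guarantees the relevant constant $\b - \g - \d$ is positive so the comparison is nontrivial. Once the convergence strip is secured, the contradiction in the second and third paragraphs is essentially immediate from the sign of $f$.
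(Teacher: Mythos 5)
There is a genuine gap, and it sits at the heart of your final step: the sign in your transformed equation is wrong, and the ``immediate'' contradiction you draw from it disappears once the sign is corrected. Writing $\beta SI/(S+I+R)=\beta I-\beta I(I+R)/(S+I+R)$ and transforming (\ref{I-ODE}) gives
\begin{equation*}
-f(\lambda)\,\widehat I(\lambda)=\bigl(d_2\lambda^2-c\lambda+\beta-\gamma-\delta\bigr)\widehat I(\lambda)=\int_{-\infty}^{\infty}e^{-\lambda x}\,\frac{\beta\,[I(x)+R(x)]\,I(x)}{S(x)+I(x)+R(x)}\,dx ,
\end{equation*}
i.e. the nonnegative integral on the right is equated with $-f(\lambda)\widehat I(\lambda)$, which is \emph{positive} when $c<c^*$ (since then $f<0$ on all of $\R$). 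So on the initial strip of convergence the two sides have consistent signs and nothing forces $I\equiv0$. The paper's actual contradiction needs two further ingredients that your proposal omits: (i) the identity must be extended to \emph{all} $\mu>0$, which is done by an analytic-continuation argument --- the abscissa of convergence of the Laplace transform of a nonnegative function is a singularity of that transform (Widder), yet the right-hand side, divided by the nowhere-vanishing $f$, is analytic on a strictly larger strip because $[I+R]/(S+I+R)$ carries extra exponential decay at $-\infty$; this is precisely why the paper must also prove that $e^{-\mu_1x}R(x)$ is bounded near $-\infty$, a point your sketch does not touch at all; and (ii) one then lets $\mu\to\infty$ in $\int_{-\infty}^{\infty} e^{-\mu x}I(x)\{f(\mu)+\beta[I(x)+R(x)]/[S(x)+I(x)+R(x)]\}\,dx=0$, where $f(\mu)\to-\infty$ dominates the term bounded by $\beta$ and makes the integrand eventually nonpositive and not identically zero. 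Without (i) and (ii), the mere negativity of $f$ on the convergence strip yields nothing.

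Your first step --- securing a strip $0<\mu<\mu_0$ on which $\widehat I(\mu)$ converges --- is the right goal, and your renewal-equation bootstrap is a plausible alternative to the paper's more elementary device (integrate $cI'-d_2I''>\sigma I$ twice to get $\eta K(x-\eta)\le(c/\sigma)K(x)$ for $K(x)=\int_{-\infty}^xI(y)\,dy$, hence $K(x-\eta)<K(x)/2$ for $\eta$ large, hence $e^{-\mu_0x}K(x)$ bounded with $\mu_0=(\ln 2)/\eta$). But as written the proposal does not reach a contradiction and so does not prove the proposition.
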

\begin{proof}
We prove the statement by contradiction.
Let $(S,I,R)$ be a solution to (\ref{S-ODE}), (\ref{I-ODE}) and (\ref{R-ODE}).
Based on the argument at the beginning of this section, we have the asymptotic behavior of $I$ as listed in (\ref{I-asymp}).
Since $\b S(x)/[S(x)+I(x)+R(x)]\to \b$ as $x\to-\infty$, there exists a number $\bar x$ such that
$$\b S(x)/[S(x)+I(x)+R(x)]-\g-\d>\s:=(\b-\g-\d)/2>0$$
for all $x<\bar x$. Applying this to (\ref{I-ODE}) yields
\begin{equation}\label{I-ine2}
  cI'(x)-d_2I''(x)>\s I(x)\ge0
\end{equation}
for all $x<\bar x$. Since $cI(x)-d_2I'(x)$ is bounded as $x\to-\infty$ by (\ref{I-asymp}), it follows that $cI'(x)-d_2I''(x)$ is integrable at $-\infty$. Lebesgue's dominated convergence theorem and the above inequality implies that $I(x)$ is also integrable at $-\infty$. Define
$$K(x):=\int_{-\infty}^x I(y)dy.$$
An integration of (\ref{I-ine2}) yields
$$\s K(x)\le c I(x)-d_2I'(x)$$
for all $x<\bar x$.
A further integration of the above inequality, together with non-negativeness of $I$ gives
$$\int_{-\infty}^x K(y)dy\le (c/\s) K(x)$$
for all $x<\bar x$. Since $K$ is non-decreasing, we have
$$\eta K(x-\eta)\le\int_{x-\eta}^xK(y)dy\le (c/\s) K(x)$$
for all $\eta>0$ and all $x<\bar x$. Hence, there exists a large $\eta>0$ such that
$$K(x-\eta)<K(x)/2$$
for all $x<\bar x$.
Denote $\mu_0:=(\ln2)/\eta>0$ and let
$$L(x):=e^{-\mu_0x}K(x).$$
It follows that $$L(x-\eta)<L(x)$$
for all $x<\bar x$, which implies $L(x)=e^{-\mu_0x}K(x)$ is bounded as $x\to-\infty$.
On account of (\ref{I-asymp}), it follows from (\ref{I-ine2}) that
$$cI'(x)>d_2I''(x),~ cI(x)>d_2I'(x),~ cK(x)>d_2I(x).$$
Hence, we conclude that $e^{-\mu_0x}I(x)$, $e^{-\mu_0x}I'(x)$ and $e^{-\mu_0x}I''(x)$ are all bounded as $x\to-\infty$.
In view of (\ref{I-asymp}), they are actually uniformly bounded on the whole real line.
Moreover, since $I(x)/[S(x)+I(x)+R(x)]\le1$ and $S(x)+I(x)+R(x)\to\Sm$ as $x\to-\infty$, $e^{-\mu_0x}I(x)/[S(x)+I(x)+R(x)]$ is also uniformly bounded on $\R$.
%An integration of (\ref{R-ODE}) from $-\infty$ to $x$ gives
%$$cR(x)=d_3R'(x)+\Psi(x),$$
%where $\Psi(x):=\int_{-\infty}^x\g I(y)dy$. Since $I(y)\sim e^{\l_0y}$ as $x\to-\infty$ and since $\l_0>\mu_0$, we have $\Psi(x)e^{-\mu_0x}$ bounded as $x\to-\infty$.
%A further integration of the above equation yields
%$$R(x)=R(0)e^{cx/d_3}-\int_0^x\Psi(y)/d_3dy.$$
Noting that $R(-\infty)=0$, we solve the linear equation (\ref{R-ODE}) and obtain
$$R(x)={\g\over c}\int_{-\infty}^xI(y)dy+{\g\over c}\int_x^0 e^{(c/d_3)(x-y)}I(y)dy+C_1e^{(c/d_3)x},$$
where $C_1$ is a constant of integration.
Note that $e^{-\mu_0x}I(x)$ is uniformly bounded as $x\to-\infty$. By choosing $\mu_1>0$ such that $\mu_1<\min\{\mu_0,c/d_3\}$, we have for any $x<0$,
\begin{align*}
  e^{-\mu_1x}R(x)&={\g\over c}\int_{-\infty}^xe^{-\mu_1(x-y)}e^{-\mu_1y}I(y)dy%+{\g\over c}\int_0^\infty e^{(c/d_3-\mu_1)x}e^{-(c/d_3)y}I(y)dy
  %\\&\qquad
  +{\g\over c}\int_x^0e^{(c/d_3-\mu_1)(x-y)}e^{-\mu_1y}I(y)dy+C_1e^{(c/d_3-\mu_1)x}
  \\&\le{\g\over c}\int_{-\infty}^xe^{-\mu_1y}I(y)dy%+{\g\over c}\int_0^\infty e^{-(c/d_3)y}I(y)dy
  +{\g\over c}\int_x^0e^{-\mu_1y}I(y)dy+C_1
  \\&={\g\over c}\int_{-\infty}^0e^{-\mu_1y}I(y)dy+C_1.%{\g\over c}\int_0^\infty e^{-(c/d_3)y}I(y)dy.
\end{align*}
Since $e^{-\mu_0x}I(x)$ is uniformly bounded as $x\to-\infty$ and $\mu_1<\mu_0$, it follows from the above inequality that $e^{-\mu_1x}R(x)$ is uniformly bounded as $x\to-\infty$. Therefore, $e^{-\mu_1x}R(x)/[S(x)+I(x)+R(x)]$ is uniformly bounded on $\R$.

Now, we can introduce two-side Laplace transform on the equation (\ref{I-ODE}):
$$f(\mu)\int_{-\infty}^\infty e^{-\mu x}I(x)dx=-\int_{-\infty}^\infty e^{-\mu x}I(x){\b [I(x)+R(x)]\over S(x)+I(x)+R(x)}dx,$$
where $f$ is the characteristic function defined in (\ref{f}).
The integrals on both side of the above equality are well defined for any $\mu\in(0,\mu_0)$.
Since $e^{-\mu_1x}R(x)/[S(x)+I(x)+R(x)]$ and $e^{-\mu_0x}I(x)/[S(x)+I(x)+R(x)]$ are uniformly bounded on the real line and $f(\mu)$ is always negative for all $\mu\in\R$ (noting that $c<c^*=2\sqrt{4d_2(\b-\g-\d)}$), the two Laplace integrals can be analytically continued to the whole right half plane; otherwise the integral on the left has a singularity at $\mu=\mu^*\in\R$ and it is analytic for all $\mu<\mu^*$ (cf. \cite{CC04,WW10,Wi41}).
However, since $e^{-\mu_1x}[I(x)+R(x)]/[S(x)+I(x)+R(x)]$ is uniformly bounded, the integral on the right is actually analytic for all $\mu<\mu^*+\mu_1$, a contradiction.
Thus, the above equality holds for all $\mu>0$ and can be rewritten as
$$\int_{-\infty}^\infty e^{-\mu x}I(x)\{f(\mu)+{\b [I(x)+R(x)]\over S(x)+I(x)+R(x)}\}dx=0.$$
This again leads to a contradiction because $f(\mu)+\b[I(x)+R(x)]/[S(x)+I(x)+R(x)]\to-\infty$ as $\mu\to\infty$, but $e^{-\mu x}I(x)$ is always non-negative for all $\mu\in\R$; see \cite{CC04,WW10} for early ideas in different settings.
Thus, we conclude the proof.
\end{proof}

\begin{prop}
  If $R_0:=\b/(\g+\d)\le1$, then there does not exist a non-trivial and non-negative traveling wave solution of (\ref{S-ODE}), (\ref{I-ODE}) and (\ref{R-ODE}) such that
  $S(-\infty)=\Sm$, $S(\infty)<\Sm$, $I(\pm\infty)=0$ and $R(-\infty)=0$.
\end{prop}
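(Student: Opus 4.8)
The plan is to rule out such a wave by producing a monotone functional out of the $I$-equation and pinning it down with the asymptotics already recorded in (\ref{I-asymp}). Set $\psi(x):=cI(x)-d_2I'(x)$. Rearranging the traveling wave equation (\ref{I-ODE}) gives the exact identity
$$\psi'(x)=cI'(x)-d_2I''(x)=\Big({\b S(x)\over S(x)+I(x)+R(x)}-(\g+\d)\Big)I(x).$$
Because $S\le S+I+R$ we have $\b S/(S+I+R)\le\b$, and the hypothesis $R_0\le1$ says precisely $\b\le\g+\d$; together with $I\ge0$ this yields $\psi'\le0$ on all of $\R$, so $\psi$ is non-increasing.

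Next I would invoke the asymptotic data. By (\ref{I-asymp}) we have $I(\pm\infty)=0$ and $I'(\pm\infty)=0$, hence $\psi(-\infty)=\psi(+\infty)=0$. A non-increasing function whose limits at $\pm\infty$ both equal $0$ is identically zero, so $\psi\equiv0$ and therefore $\psi'\equiv0$, i.e.
$$\Big({\b S(x)\over S(x)+I(x)+R(x)}-(\g+\d)\Big)I(x)=0\qquad\text{for every }x\in\R.$$
Now suppose, for contradiction, that $I$ is non-trivial; choose $x_0$ with $I(x_0)>0$. At $x_0$ the first factor must vanish, i.e. $\b S(x_0)/(S(x_0)+I(x_0)+R(x_0))=\g+\d$. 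But $I(x_0)>0$ forces $S(x_0)+I(x_0)+R(x_0)>S(x_0)$, so $\b S(x_0)/(S(x_0)+I(x_0)+R(x_0))<\b\le\g+\d$, a contradiction. Hence $I\equiv0$, which contradicts non-triviality of the wave (and is in any case incompatible with $S(\infty)<S_{-\infty}$, since with $I\equiv0$ an integration of (\ref{S-ODE}) over $\R$ would give $c(S_{-\infty}-S(\infty))=\int_\R\b SI/(S+I+R)\,dx=0$). This completes the argument.

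The point that needs care is the borderline case $R_0=1$: there the estimate $\b S/(S+I+R)\le\g+\d$ is not strict, so $\psi'\le0$ only degenerately, and one must observe that the estimate does become strict exactly at points where $I>0$ (since then $I+R>0$ makes $S/(S+I+R)<1$) — this is what forces $I\equiv0$ from $\psi'\equiv0$. For $R_0<1$ the same step is immediate because the first factor is then uniformly negative. It is also worth noting why the Laplace-transform argument of the preceding proposition does not transfer: when $R_0\le1$ the characteristic function $f$ of (\ref{f}) satisfies $f(\mu)>0$ for small $\mu>0$, rather than being negative on all of $\R$, so the analytic-continuation contradiction used there is unavailable, and the monotonicity argument above replaces it. The remaining items — smoothness of $\psi$, the harmless convention assigning the incidence term the value $0$ at any point where $S+I+R=0$ (such a point necessarily has $I=0$), and the elementary fact that a monotone function with vanishing limits at $\pm\infty$ is identically zero — are routine.
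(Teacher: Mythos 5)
Your argument is correct; it reaches the same conclusion as the paper but through a different monotone quantity and a different endgame. You integrate the $I$-equation once via the flux $\psi:=cI-d_2I'$, observe that $\psi'=\b SI/(S+I+R)-(\g+\d)I\le0$ when $R_0\le1$, use $\psi(\pm\infty)=0$ (from (\ref{I-asymp})) to force $\psi\equiv0$, and then kill $I$ by a pointwise strictness argument: at any point with $I>0$ one has $S<S+I+R$, so the factor $\b S/(S+I+R)-(\g+\d)$ is strictly negative and the identity $\psi'\equiv0$ fails. The paper instead multiplies (\ref{I-ODE}) by the integrating factor $e^{-(c/d_2)x}$, notes that $e^{-(c/d_2)x}I'(x)$ is non-decreasing with limit $0$ at $+\infty$, concludes $I'\le0$ on all of $\R$, and then squeezes $0\le I(x)\le I(-\infty)=0$. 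Both proofs rest on exactly the same two inputs, namely the sign of $\b S/(S+I+R)-(\g+\d)$ and the asymptotics (\ref{I-asymp}); the paper's route is marginally cleaner at the borderline $R_0=1$, since monotonicity of $I$ together with $I\ge0$ and $I(-\infty)=0$ requires no strictness of the incidence inequality, whereas your version must (and does) extract strictness at points where $I>0$. Your closing remark that $I\equiv0$ is in any case incompatible with $S(\infty)<\Sm$ via integration of (\ref{S-ODE}) is a useful sanity check that the paper leaves implicit, and your observation about why the Laplace-transform argument of the preceding proposition does not carry over (the characteristic function $f$ in (\ref{f}) is no longer negative near $\mu=0$) is accurate.
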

\begin{proof}
Again, we prove by contradiction.
Let $(S,I,R)$ be a solution to (\ref{S-ODE}), (\ref{I-ODE}) and (\ref{R-ODE}).
Based on the argument at the beginning of this section, we have the asymptotic behavior of $I$ as listed in (\ref{I-asymp}).
If $R_0:=\b/(\g+\d)\le1$, then ${\b S(x)I(x)\over S(x)+I(x)+R(x)}\le(\g+\d) I(x)$ for all $x\in\R$.
From (\ref{I-ODE}) we have
$${d\over dx}[e^{-(c/d_2)x}{d\over dx}I(x)]=-{1\over d_2}e^{-(c/d_2)x}[{\b S(x)I(x)\over S(x)+I(x)+R(x)}-(\g+\d) I(x)]\ge0,$$
which implies that the function $e^{-(c/d_2)x}I'(x)$ is non-decreasing. Since $I'(\infty)=0$ by (\ref{I-asymp}) and $e^{-(c/d_2)x}\to0$ as $x\to\infty$, it follows that $I'(x)\le0$ for all $x\in\R$. Again from $I(\pm\infty)=0$ in (\ref{I-asymp}) we obtain $I(x)=0$ for all $x\in\R$, a contradiction.
\end{proof}
One may prove the nonexistence results in a different way by analyzing the Jacobian matrix of the six-dimensional first-order linearized system of the traveling wave equation for $(S,I,R,S',I',R')$ at the equilibrium with $S=S_{-\infty}$ and $I=R=S'=I'=R'=0$. However, we prefer to use the technique of Laplace transformation which could be extended to the study of high-dimensional systems with delays.

\section{Discussion}

Broadly speaking, there are three main types of interspecific interaction: predator-prey, competition, and mutualism. Often mutualism gives rise to cooperative systems whose dynamics is better understood. In particular, the works by Lui \cite{Lui1989} and  Weinberger, Lewis and  Li \cite{Weinberger2002,Weinberger2002-1,Weinberger2007} assure that the spreading speeds of cooperative systems can be determined by their linearizations for cooperative systems. Such a phenomena is also called the linear conjecture. There have been mathematical results when the linear conjecture is not true.   Hadeler and Rothe \cite{Hadeler1975}  showed that linear determinacy can be violated in certain cases;  the linear
conjecture is not always true.  Hosono \cite{Hosnono1998} showed numerically that the
linear determinacy may not be true for the Lotka-Volterra competition model.
W. Huang and M. Han \cite{Huang2011} showed analytically the linear determinacy does not hold for some range
of parameters.

It is known that some competition models can be converted into cooperative systems and therefore are linearly determinant with some appropriate assumptions \cite{LinLiRuan2011,WangCCC2010,Weinberger2002}. The interaction of predator-prey
describes that the predator species benefits from killing and consuming the prey species, and the
prey population size may be regulated as a result, which can also be often used in models of SIR models for infectious disease and plant-herbivore interactions. Predator-prey models are more difficult to study and remain a challenging to the mathematical biology community \cite{CCC2013,Huang2003}.

Our key contribution in this paper is that we are able to show the minimum speed for the general diffusive model (\ref{S-PDEN}-\ref{R-PDEN}) with non-constant total population can be determined by its linearization at the initial disease-free equilibrium. Our result may contribute to shed a light on the linear conjecture for predator-prey systems. As discussed in \cite{WWW12}, it is reasonable to believe that the existence of traveling waves solution is valid for $c=c^*$. Our method in this paper is mainly based on the Schauder fixed point theorem for equivalent non-monotone abstract operators. Similar ideas have been used in \cite{WWW12,WW10}.    Related methods  can also be found in other studies (e.g. \cite{LinLiRuan2011,Ma01,Wa09,WLW12} and other recent papers for different types of spatial models, also see a brief survey at the end of \cite{WWW12}). However, the general diffusive Kermack-McKendrick SIR model (\ref{S-PDEN}-\ref{R-PDEN}) involves three unknown variables $S, I, R$, several significant new ingredients have been introduced in the proof.  Specifically,  one of the challenging and difficult tasks in this paper is to construct and verify a suitable invariant convex set of three dimensions ($S, I, R$) for the non-monotone operators.  The approach in this paper provides a promising method to deal with high dimensional epidemic models.  It would be difficult, if possible, to investigate traveling waves for high dimensional models with phase portraits analysis.

One might attempt to solve for $R$ in term of some integral of $I$ and then reduce (\ref{S-ODE}-\ref{R-ODE}) to a system only with $S,I$. It is unclear such an approach can be used to prove the existence of traveling waves.  However, it would make the SIR model more complicated and prevent seeing biological meanings of $R$. The construction of a three dimensional invariant set is a better option for this problem as our results involve nonexistence of traveling waves, more importantly, we try to provide an effective approach to deal with traveling waves of high dimensional SIR models.

\section{Appendix}
\subsection{Proof of Lemma \ref{lem-sub-ine}}
\begin{proof}
  In view of (\ref{S-}) and (\ref{I+}), the first inequality (\ref{S-ine}) is the same as
  $$-\b e^{\l_0x}\ge S_{-\infty}e^{\ep_1x}(d_1M_1\ep_1^2-cM_1\ep_1)$$
  for all $x\le x_1:=-\ep_1^{-1}\ln M_1$. The above inequality can be written as
  $$M_1\ep_1(c-d_1\ep_1)\ge\b e^{(\l_0-\ep_1)x}.$$
  Note that $x\le x_1:=-\ep_1^{-1}\ln M_1$. It suffices to prove
  $$M_1\ep_1(c-d_1\ep_1)\ge\b M_1^{-(\l_0-\ep_1)/\ep_1},$$
  which is obviously true if we choose $\ep_1>0$ such that $\ep_1<\min\{\l_0,c/d_1\}$ and then let $M_1$ be sufficiently large.

  Now we intend to prove the second inequality (\ref{I-ine}),
  which, by subtracting both sides by $(\b-\g-\d)I_-$, is the same as
  $$-{\b I_-(I_++R_+)\over S_-+I_++R_+}\ge-d_2I_-''+cI_-'-(\b-\g-\d)I_-=-M_2f(\l_0+\ep_2)e^{(\l_0+\ep_2)x},$$
  where $f$ is defined in (\ref{f}) with $\l_0$ as its smaller root.
  Note that $f$ is concave down, we can choose a sufficiently small $\ep_2\in(0,\ep_1)$ such that $f(\l_0+\ep_2)>0$.
  Then, we assume $M_2$ is sufficiently large that $x_2<x_1$ holds.
  It suffices to show
  $$M_2f(\l_0+\ep_2)e^{(\l_0+\ep_2)x}\ge {\b I_+(I_++R_+)\over S_-},$$
  which, in view of (\ref{S-}), (\ref{I+}) and (\ref{R+}), is equivalent to
  $$M_2f(\l_0+\ep_2)\Sm(1-M_1e^{\ep_1x})\ge {\b(\g+c\l_0-d_3\l_0^2)\over c\l_0-d_3\l_0^2}e^{(\l_0-\ep_2)x}.$$
  Noting that $x\le x_2:=-\ep_2^{-1}\ln M_2$, we only need to prove
  $$M_2f(\l_0+\ep_2)\Sm(1-M_1M_2^{-\ep_1/\ep_2})\ge {\b(\g+c\l_0-d_3\l_0^2)\over c\l_0-d_3\l_0^2}M_2^{-(\l_0-\ep_2)/\ep_2},$$
  which is true for large $M_2$ because as $M_2\to\infty$, the left-hand side tends to infinity and the right-hand side vanishes (recall that $0<\ep_2<\ep_1<\l_0$).

  Finally, we are ready to verify the last inequality (\ref{R-ine}). First, since $c-d_3\l_0>0$ in (\ref{d3-ine}), we can choose $\ep_3\in(0,\ep_2)$ so small that
  $c-d_3(\l_0+\ep_3)>0$. In view of (\ref{I-}) and (\ref{R-}), the inequality (\ref{R-ine}) can be written as
  $$\g e^{\l_0x}(1-M_2e^{\ep_2x})\ge{\g\over c\l_0-d_3\l_0^2}\{e^{\l_0x}(c\l_0-d_3\l_0^2)-M_3e^{(\l_0+\ep_3)x}[c(\l_0+\ep_3)-d_3(\l_0+\ep_3)^2]\},$$
  which is equivalent to
  $${c(\l_0+\ep_3)-d_3(\l_0+\ep_3)^2\over c\l_0-d_3\l_0^2}M_3\ge M_2e^{(\ep_2-\ep_3)x}.$$
  Note that $\ep_3<\ep_2$ and $x\le x_3:=-\ep_3^{-1}\ln M_3$, it suffices to prove the above inequality for $x=x_3$:
  $${c(\l_0+\ep_3)-d_3(\l_0+\ep_3)^2\over c\l_0-d_3\l_0^2}M_3\ge M_2M_3^{-(\ep_2-\ep_3)/\ep_3}.$$
  This is true for large $M_3$ because as $M_3\to\infty$, the left-hand side tends to infinity and the right-hand side vanishes.
  This ends the proof of our lemma.
\end{proof}

\subsection{Proof of Lemma \ref{lem-D-D}}
\begin{proof}
  It follows from the definitions of $\D_i$ and $\D_i^{-1}$ in (\ref{Di}) and (\ref{Di-}) that
  \begin{eqnarray*}
    [\D_i^{-1}(\D_ih)](x)&=&{1\over\r_i}\int_{-\infty}^xe^{\l_i^-(x-y)}[-d_ih''(y)+ch'(y)+\a_ih(y)]dy
    \\&&+{1\over\r_i}\int_x^\infty e^{\l_i^+(x-y)}[-d_ih''(y)+ch'(y)+\a_ih(y)]dy.
  \end{eqnarray*}
  Making use of integration by parts, we obtain
  $$\int_{-\infty}^xe^{\l_i^-(x-y)}h'(y)dy=h(x)+\l_i^-\int_{-\infty}^xe^{\l_i^-(x-y)}h(y)dy,$$
  and
  $$\int_{-\infty}^xe^{\l_i^-(x-y)}h''(y)dy=h'(x)+\l_i^-h(x)+(\l_i^-)^2\int_{-\infty}^xe^{\l_i^-(x-y)}h(y)dy.$$
  Therefore, we have
  $$\int_{-\infty}^xe^{\l_i^-(x-y)}[-d_ih''(y)+ch'(y)+\a_ih(y)]dy
  =-d_ih'(x)+(-d_i\l_i^-+c)h(x).$$
  Here we have used the fact that $\l_i^-$ is a root of the function $f_i(\l)=-d_i\l^2+c\l+\a_i$; see (\ref{li}) and (\ref{fi}).
  Similarly, it can be shown that
  $$\int_x^\infty e^{\l_i^+(x-y)}[-d_ih''(y)+ch'(y)+\a_ih(y)]dy=d_ih'(x)+(d_i\l_i^+-c)h(x).$$
  Applying the above two equalities to the expression of $\D_i^{-1}(\D_ih)$ gives
  $$[\D_i^{-1}(\D_ih)](x)={d_i(\l_i^+-\l_i^-)\over\r_i}h(x)=h(x),$$
  where in the last equality we have used the definition of $\r_i$ in (\ref{ri}).
  This proves (\ref{D-D}).

   Let $x^*:=-\ln M/\ep$ be the point where $g$ is not differentiable.
  Recall from (\ref{li}) and (\ref{fi}) that
  \begin{eqnarray}\label{fi2}
    f_i(k):=-d_ik^2+ck+\a_i=d_i(k-\l_i^-)(\l_i^+-k).
  \end{eqnarray}
%  which has two roots $\l_i^\pm$ as defined in (\ref{li}).
  It is easily seen from (\ref{Di}) that
  \begin{equation}\label{Dig}
    (\D_ig)(x)=\begin{cases}
      f_i(\l)e^{\l x}-Mf_i(\l+\ep)e^{(\l+\ep)x},&\ x<x^*,\\
      0,&\ x>x^*.
    \end{cases}
  \end{equation}
  To prove (\ref{D-D-ine}), we will consider the two cases $x\le x^*$ and $x\ge x^*$ respectively.
  When $x\le x^*$, we have from (\ref{Di-}) and (\ref{Dig}) that
  \begin{eqnarray}\label{D-Dg1}
    [\D_i^{-1}(\D_ig)](x)=f_i(\l)A(\l)-Mf_i(\l+\ep)A(\l+\ep),
  \end{eqnarray}
  where
  \begin{eqnarray*}
    A(k)&:=&{1\over\r_i}\int_{-\infty}^xe^{\l_i^-(x-y)+ky}dy+{1\over\r_i}\int_x^{x^*}e^{\l_i^+(x-y)+ky}dy
    \\&=&{e^{kx}(\l_i^+-\l_i^-)\over\r_i(k-\l_i^-)(\l_i^+-k)}-{e^{kx^*+\l_i^+(x-x^*)}\over\r_i(\l_i^+-k)}
  \end{eqnarray*}
  for $k=\l$ or $\l+\ep$.
  In view of (\ref{ri}) and (\ref{fi2}), it follows from the above equality that
  $$f_i(k)A(k)=e^{kx}-{k-\l_i^-\over\l_i^+-\l_i^-}e^{kx^*+\l_i^+(x-x^*)}.$$
  Applying this to (\ref{D-Dg1}) and on account of $Me^{\ep x^*}=1$, we obtain
  \begin{eqnarray*}
    [\D_i^{-1}(\D_ig)](x)&=&[e^{\l x}-{\l-\l_i^-\over\l_i^+-\l_i^-}e^{\l x^*+\l_i^+(x-x^*)}]
    \\&&-[Me^{(\l+\ep) x}-{\l+\ep-\l_i^-\over\l_i^+-\l_i^-}e^{\l x^*+\l_i^+(x-x^*)}]
    \\&=&[e^{\l x}-Me^{(\l+\ep) x}]+{\ep\over\l_i^+-\l_i^-}e^{\l x^*+\l_i^+(x-x^*)}
    \\&\ge&e^{\l x}-Me^{(\l+\ep) x}.
  \end{eqnarray*}
  This proves (\ref{D-D-ine}) for $x\le x^*$. When $x\ge x^*$, we have from (\ref{Di-}) and (\ref{Dig}) that
  \begin{eqnarray}\label{D-Dg2}
    [\D_i^{-1}(\D_ig)](x)=f_i(\l)B(\l)-Mf_i(\l+\ep)B(\l+\ep),
  \end{eqnarray}
  where
  \begin{eqnarray*}
    B(k):={1\over\r_i}\int_{-\infty}^{x^*}e^{\l_i^-(x-y)+ky}dy
    ={e^{kx^*+\l_i^-(x-x^*)}\over\r_i(k-\l_i^-)}
  \end{eqnarray*}
  for $k=\l$ or $\l+\ep$.
  In view of (\ref{ri}) and (\ref{fi2}), it follows from the above equality that
  $$f_i(k)B(k)={\l_i^+-k\over\l_i^+-\l_i^-}e^{kx^*+\l_i^-(x-x^*)}.$$
  Applying this to (\ref{D-Dg2}) and on account of $Me^{\ep x^*}=1$, we obtain
  \begin{eqnarray*}
    [\D_i^{-1}(\D_ig)](x)&=&{\l_i^+-\l\over\l_i^+-\l_i^-}e^{\l x^*+\l_i^-(x-x^*)}
    %\\&&
    -{\l_i^+-\l-\ep\over\l_i^+-\l_i^-}e^{\l x^*+\l_i^-(x-x^*)}
    \\&=&{\ep\over\l_i^+-\l_i^-}e^{\l x^*+\l_i^-(x-x^*)}
    \\&\ge&0.
  \end{eqnarray*}
  This gives (\ref{D-D-ine}) in the case $x\ge x^*$.
\end{proof}

\subsection{Proof of Lemma \ref{lem-invariant}}
\begin{proof}
  Throughout this proof, we will frequently use the inequalities $0<S/(S+I+R)<1$ and $0<I/(S+I+R)<1$.
  Since $\a_1S-\b SI/(S+I+R)\le\a_1S_+=\D_1S_+$; see the definition of $\D_1$ in (\ref{Di}), we obtain from (\ref{F1}) and (\ref{D-D}) that
  $$F_1(S,I,R)\le\D_1^{-1}(\D_1S_+)=S_+.$$
  By (\ref{S-ine}) in Lemma \ref{lem-sub-ine}, we have for $x\le x_1$,
  $$\a_1S-\b SI/(S+I+R)\ge\a_1S_--\b I_+\ge\a_1S_--d_1S_-''+cS_-'=\D_1S_-.
  $$
  When $x\ge x_1$, it follows from $\a_1>\b$ (recalling the choice of $\a_1$ in the paragraph after (\ref{Di-''})) and $S_-(x)=0$ that
  $$\a_1S-\b SI/(S+I+R)\ge(\a_1-\b)S\ge0=\D_1S_-.$$
  Coupling the above two inequalities and making use of (\ref{D-D-ine}) yield
  $$F_1(S,I,R)\ge\D_1^{-1}(\D_1S_-)\ge S_-.$$
  Since $\a_2>\g+\d$ (by the choice of $\a_2$) and $\l_0$ is a root of $f$ defined in (\ref{f}), we have
  $$\a_2I+\b SI/(S+I+R)-(\g+\d) I\le\a_2I_++\b I_+-(\g+\d) I_+=\a_2I_+-d_2I_+''+cI_+'=\D_2I_+.$$
  In view of (\ref{D-D}), we obtain from the above inequality that
  $$F_2(S,I,R)\le\D_2^{-1}(\D_2I_+)=I_+.$$
  By (\ref{I-ine}) in Lemma \ref{lem-sub-ine} and monotonicity of $\b SI/(S+I+R)$ in $S$, we obtain
  \begin{align*}
  \a_2I+\b SI/(S+I+R)-(\g+\d) I&\ge\a_2I_-+\b S_-I_-/(S_-+I_++R_+)-(\g+\d) I_-
  \\&\ge\a_2 I_--d_2I_-''+cI_-'
  \\&=\D_2I_-
  \end{align*}
  for $x\le x_2$. When $x\ge x_2$, it is readily seen from $\a_2>\g+\d$ and $I_-(x)=0$ that
  $$\a_2I+\b SI/(S+I+R)-(\g+\d) I\ge\a_2I-(\g+\d) I\ge0=\D_2I_-.$$
  A combination of the above two inequalities and (\ref{D-D-ine}) yields
  $$F_2(S,I,R)\ge\D_2^{-1}(\D_2I_-)\ge I_-.$$
  From the definitions of $I_+$ and $R_+$ in (\ref{I+}) and (\ref{R+}), we have
  $$\a_3R+\g I\le\a_3R_++\g I_+=\a_3R_++cR_+'-d_3R_+''=\D_3R_+.$$
  Thus, it follows from (\ref{F3}) and (\ref{D-D}) that
  $$F_3(S,I,R)\le\D_3^{-1}(\D_3R_+)=R_+.$$
  When $x\le x_3$, we obtain from (\ref{R-ine}) that
  $$\a_3R+\g I\ge\a_3R_-+\g I_-\ge\a_3 R_-+cR_-'-d_3R_-''=\D_3R_-.$$
  When $x\ge x_3$, we have $R_-(x)=0$ and
  $$\\a_3R+\g I\ge0=\D_3R_-.$$
  Hence, it follows from (\ref{D-D-ine}) that
  $$F_3(S,I,R)\ge\D_3^{-1}(\D_3R_-)\ge R_-.$$
  This ends our proof of the lemma.
\end{proof}

\subsection{Proof of Lemma \ref{lem-cc}}
\begin{proof}
  Note that the standard incidence function $\b SI/(S+I+R)$ has bounded partial derivatives with respect to $S$, $I$ and $R$. For example, the partial derivative of $\b SI/(S+I+R)$ with respect to $S$ is ${\b (I+R)I/(S+I+R)^2},$ which is bounded by $\b$. Similarly, we can show that the partial derivatives with respect to $I$ and $R$ are also bounded by $\b$. Therefore, for any $(S_1,I_1,R_1)\in\G$ and $(S_2,I_2,R_2)\in\G$, we have
  $$|{\b S_1I_1\over S_1+I_1+R_1}-{\b S_2I_2\over S_2+I_2+R_2}|\le \b(|S_1-S_2|+|I_1-I_2|+|R_1-R_2|).$$
  It is readily seen that
  $$|(\a_1S_1-{\b S_1I_1\over S_1+I_1+R_1})-(\a_1S_2-{\b S_2I_2\over S_2+I_2+R_2})|
  \le(\a_1+\b)(|S_1-S_2|+|I_1-I_2|+|R_1-R_2|).$$
  Consequently, we obtain from the definition (\ref{F1}) that
  \begin{align*}
    |F_1(S_1,I_1,R_1)(x)-F_1(S_2,I_2,R_2)(x)|e^{-\mu|x|}\le{\a_1+\b\over\r_1}(|S_1-S_2|_\mu+|I_1-I_2|_\mu+|R_1-R_2|_\mu)C(x),
  \end{align*}
  where
  $$C(x):=e^{-\mu|x|}[\int_{-\infty}^x e^{\l_1^-(x-y)+\mu|y|}dy+\int_x^\infty e^{\l_1^+(x-y)+\mu|y|}dy].$$
  Here $S_1-S_2\in C_{-\mu,\mu}(\R)=B_\mu(\R,\R)$ and $|S_1-S_2|_\mu=\sup_{x\in\R}e^{-\mu|x|}|S_1(x)-S_2(x)|$; see (\ref{Bmu}) and (\ref{norm}).
  To prove the continuity of $F_1$, it suffices to show that $C(x)$ is uniformly bounded for $x\in\R$.
  Since $\l_1^-<-\mu<\mu<\l_1^+$, applying L'H\^opital's rule to the above formula yields
  $$C(-\infty)={1\over\mu+\l_1^+}-{1\over\mu+\l_1^-}$$
  and
  $$C(\infty)={1\over\l_1^+-\mu}+{1\over\mu-\l_1^-}.$$
  Hence, we conclude that $C(x)$ is uniformly bounded on $\R$ and thus $F_1$ is a continuous map from $\G$ to $B_\mu(\R,\R)$ with respect to the norm $|\cdot|_\mu$.
  Similarly, we can show that $F_2$ and $F_3$ are also continuous. Consequently, $F$ is a continuous map on $\G$ with respect to the norm $|\cdot|_\mu$.

  To prove the compactness of $F$, we shall make use of Arzela-Ascoli theorem and a standard diagonal process.
  Let $I_k:=[-k,k]$ with $k\in\N$ be a compact interval on $\R$ and temporarily we regard $\G$ as a bounded subset of $C(I_k,\R^3)$ equipped with the maximum norm.
  Since $F$ maps $\G$ into $\G$, it is obvious that $F$ is uniformly bounded.
  We will use the following two inequalities to show that $F$ is equi-continuous.
  Namely, from the definition of $F_i$ in (\ref{F1}-\ref{F2}) and integral representation for the derivative of $\D_i^{-1}$ in (\ref{Di-'}) we have for any $(S,I,R)\in\G$,
  \begin{align*}
    |[F_1(S,I,R)]'(x)|&\le{-\l_1^-\a_1S_{-\infty}\over\r_1}\int_{-\infty}^xe^{\l_1^-(x-y)}dy
    +{\l_1^+\a_1S_{-\infty}\over\r_1}\int_x^\infty e^{\l_1^+(x-y)}dy
    \\&={2\a_1S_{-\infty}\over\r_1},
  \end{align*}
  and
  \begin{align*}
    |[F_2(S,I,R)]'(x)|&\le{-\l_2^-(\a_2+\b-\g-\d)\over\r_2}\int_{-\infty}^xe^{\l_2^-(x-y)+\l_0y}dy
    \\&+{\l_2^+(\a_2+\b-\g-\d)\over\r_2}\int_x^\infty e^{\l_2^+(x-y)+\l_0y}dy
    \\&={(\a_2+\b-\g-\d)e^{\l_0 x}\over\r_2}({-\l_2^-\over\l_0-\l_2^-}+{\l_2^+\over\l_2^+-\l_0})
    \\&={c\l_0+2\a_2\over\r_2}e^{\l_0 x},
  \end{align*}
  and
  \begin{align*}
    |[F_3(S,I,R)]'(x)|&\le{-\l_3^-\g(\a_3+c\l_0-d_3\l_0^2)\over\r_3(c\l_0-d_3\l_0^2)}\int_{-\infty}^xe^{\l_3^-(x-y)+\l_0y}dy
    \\&+{\l_3^+\g(\a_3+c\l_0-d_3\l_0^2)\over\r_3(c\l_0-d_3\l_0^2)}\int_x^\infty e^{\l_3^+(x-y)+\l_0y}dy
    \\&={e^{\l_0 x}\g(\a_3+c\l_0-d_3\l_0^2)\over\r_3(c\l_0-d_3\l_0^2)}({-\l_3^-\over\l_0-\l_3^-}+{\l_3^+\over\l_3^+-\l_0})
    \\&={\g(c\l_0+2\a_3)\over\r_3(c\l_0-d_3\l_0^2)}e^{\l_0 x}
  \end{align*}
  Here we have made use of the facts that $\l_0$ defined in (\ref{l0}) is a root of $f$ in (\ref{f}) and $\l_i^\pm$ defined in (\ref{li}) are the roots of $f_i$ in (\ref{fi}).
  Let $\{u_n\}$ be a sequence of $\G$, which can be also viewed as a bounded subset of $C(I_k)$ with $I_k:=[-k,k]$. Since $F$ is uniformly bounded and equi-continuous, by the Arzela-Ascoli theorem and the standard diagonal process, we can extract a subsequence $\{u_{n_k}\}$ such that $v_{n_k}:=Fu_{n_k}$ converges in $C(I_k)$ for any $k\in\N$. Let $v$ be the limit of $v_{n_k}$.
  It is readily seen that $v\in C(\R,\R^3)$. Furthermore, since $F(\G)\subset\G$ by Lemma \ref{lem-invariant} and $\G$ is closed, it follows that $v\in\G$. Now we come back to the norm $|\cdot|_\mu$ defined in (\ref{norm}).
  Note that $\mu>\l_0>0$, it follows from (\ref{I+}) and (\ref{R+}) that $e^{-\mu |x|}I_+(x)$ and $e^{-\mu |x|}R_+(x)$ are uniformly bounded on $\R$. Thus, $\G$ is uniformly bounded with respect to the norm $|\cdot|_\mu$. Consequently, the norm $|v_{n_k}-v|_\mu$ is uniformly bounded for all $k\in\N$. Given any $\ep>0$, we can find an integer $M>0$ independent of $v_{n_k}$ such that
  $$e^{-\mu|x|}|v_{n_k}(x)-v(x)|<\ep$$ for any $|x|>M$ and $k\in\N$.
  Since $v_{n_k}$ converges to $v$ on the compact interval $[-M,M]$ with respect to the maximum norm, there exists $K\in\N$ such that
  $$e^{-\mu|x|}|v_{n_k}(x)-v(x)|<\ep$$ for any $|x|\le M$ and $k>K$.
  The above two inequalities imply that $v_{n_k}$ converges to $v$ with respect to the norm $|\cdot|_\mu$.
  This proves the compactness of the map $F$.
\end{proof}


\begin{thebibliography}{99}
%
%\bibitem{AH05}
%S. Ai, W. Huang,
%Travelling waves for a reaction-diffusion system in population dynamics and epidemiology.
%Proc. Roy. Soc. Edinburgh Sect. A 135 (2005), no. 4, 663-675.

%
%
%\bibitem{Aronson1975} D. G. Aronson and H. F. Weinberger, Nonlinear diffusion in population genetics, combustion,
%and nerve pulse propagation, in Partial Differential Equations and Related Topics,
%J. A. Goldstein, ed., Lecture Notes in Mathematics Ser. 446, Springer-Verlag, Berlin, 1975,
%pp. 5-49.

\bibitem{Brauer2012}
F. Brauer and C. Castillo-Chvez,
Mathematical Models in Population Biology and Epidemiology,
Springer, Second Edition, 2012.

\bibitem{CC04}
J. Carr and A. Chmaj,
Uniqueness of travelling waves for nonlocal monostable equations,
Proc. Amer. Math. Soc. 132 (2004), no. 8, 2433-2439.


\bibitem{CCC2013}
C. Castillo-Chavez, B. Li and H. Wang,
Some recent developments on linear determinacy,
Math. Biosci. Eng. 10 (2013), no. 5-6, 1419-1436.


\bibitem{DHR90}
O. Diekmann, J. A. P. Heesterbeek and J. A. J. Metz,
On the definition and the computation of the basic reproduction ratio $R_0$ in models for infectious diseases in heterogeneous populations,
J. Math. Biol. 28 (1990), no. 4, 365-382.

\bibitem{Du83}
S. R. Dunbar,
Travelling wave solutions of diffusive Lotka-Volterra equations,
J. Math. Biol. 17 (1983), no. 1, 11-32.

\bibitem{Du84}
S. R. Dunbar,
Traveling wave solutions of diffusive Lotka-Volterra equations: a heteroclinic connection in $R^4$,
Trans. Amer. Math. Soc. 286 (1984), no. 2, 557-594.

\bibitem{Epstein}
J. Epstein,
Nonlinear Dynamics, Mathematical Biology, And Social Science: Wise Use Of Alternative Therapies,
Westview Press, 1997.

\bibitem{Hadeler1975}
K. P Hadeler and F. Rothe,
Traveling fronts in nonlinear diffusion equation,
J. Math. Bio. 2 (1975), 251-263.

\bibitem{Heesterbeek1993}
J. Heesterbeek and J.  Metz,
The saturating contact rate in marrige and epidemic models,
J. Math. Biol. 31 (1993), 529-539.

\bibitem{Hosnono1998}
Y. Hosono,
The minimal speed of traveling fronts for a diffusive Lotka Volterra competition model,
Bull. Math. Biol. 60 (1998) 435-448.

\bibitem{HI94}
Y. Hosono and B. Ilyas,
Existence of traveling waves with any positive speed for a diffusive epidemic model,
Nonlinear World 1 (1994), no. 3, 277-290.

\bibitem{HI95}
Y. Hosono and B. Ilyas,
Traveling waves for a simple diffusive epidemic model,
Math. Models Methods Appl. Sci. 5 (1995), no. 7, 935-966.
%
%\bibitem{Hu04}
%W. Huang,
%Traveling waves for a biological reaction-diffusion model.
%J. Dynam. Differential Equations 16 (2004), no. 3, 745-765.

\bibitem{Huang2011}
W. Huang and M. Han,
Non-Linear Determinacy of Minimum Wave Speed for a Lotka-Volterra Competition Model,
J. Differential Equations, 251 (2011), 1549-1561.

\bibitem{Huang2003} J. Huang, G. Lu, S. Ruan, Existence of traveling wave solutions in a diffusive
predator-prey model, J. Math. Biol. 46(2003) 132-152.

\bibitem{Ka84}
A. K\"all\'en,
Thresholds and travelling waves in an epidemic model for rabies,
Nonlinear Anal. TMA  8 (1984), 851-856.

\bibitem{Kermack1927}
W. O. Kermack and A. G.  McKendrick,
A contribution to the mathematical theory of epidemics,
Proc. R. Soc. Lond. B  115 (1927), 700-721.


\bibitem{Weinberger2002}
M. Lewis, B. Li and H. Weinberger,
Spreading speed and linear determinacy for two-species competition models,
J. Math. Biol. 45 (2002), 219-233.
%
%\bibitem{Weinberger2005}
%B. Li, H. Weinberger, M. Lewis, Spreading speeds as slowest wave speeds for cooperative systems. Math. Biosci. 196 (2005), no. 1, 82-98.

\bibitem{LinLiRuan2011}
G. Lin, W-T, Li and S. Ruan,
Spreading speeds and traveling waves in competitive recursion systems,
J. Math. Biol. 62 (2011), 165-201.


\bibitem{Lui1989}
R. Lui,
Biological growth and spread modeled by systems of recursions. I. Mathematical theory,
Math. Biosci. 93 (1989), no. 2, 269-295.

\bibitem{Ma01}
S. Ma,
Traveling wavefronts for delayed reaction-diffusion systems via a fixed point theorem,
J. Differential Equations 171 (2001), no. 2, 294-314.


\bibitem{MenaHethcote1992}
J. Mena-Lorca and  H. W. Hethcote,
Dynamic models of infectious diseases as regulators of population sizes,
J. Math. Biol. 30 (1992), 693-716.

\bibitem{Thieme}
H. R.  Thieme and C.  Castillo-Chavez,
On  the  role  of variable  infectivity in  the  dynamics  of the human  immunodeficiency virus epidemic.  In: Castillo-Chavez, C. (ed.)  Mathematical  and Statistical Approaches  to  AIDS  Epidemiology. (Lect.  Notes  Biomath.,  vol.  83)  Berlin Heidelberg New York:  Springer  1989.

\bibitem{vW02}
P. van den Driessche and J. Watmough,
Reproduction numbers and sub-threshold endemic equilibria for compartmental models of disease transmission,
Math. Biosci., 180 (2002), 29-48.

\bibitem{Wa09}
H. Wang,
On the existence of traveling waves for delayed reaction-diffusion equations,
J. Differential Equations 247 (2009), no. 3, 887-905.

\bibitem{WangCCC2010}
H. Wang and C. Castillo-Chavez,
Spreading speeds and traveling waves for non-cooperative integro-difference systems,
Discrete Contin. Dyn. Syst. B 17 (2012), 2243-2266.

\bibitem{Wang2010jns}
H. Wang,
Spreading speeds and traveling waves for non-cooperative reaction-diffusion systems,
J. Nonlinear Sci. 21 (2011)747-783.


\bibitem{WWW12}
X.-S. Wang, H. Wang and J. Wu,
Traveling waves of diffusive predator-prey systems: disease outbreak propagation,
Discrete Contin. Dyn. Syst. A 32 (2012), 3303-3324.

\bibitem{WW10}
Z.-C. Wang and J. Wu, Travelling waves of a diffusive Kermack-McKendrick epidemic model with non-local delayed transmission,
Proc. R. Soc. Lond. Ser. A 466 (2010), no. 2113, 237-261.

\bibitem{WLW12}
Z.-C. Wang, R. Liu and J. Wu, Traveling waves of the spread of avian influenza,
Proc. Amer. Math. Soc. 140 (2012), 3931-3946.
%
%\bibitem{Weinberger1982} H. F. Weinberger, Long-time behavior of a class of biological models.
%SIAM J. Math. Anal., 13 (1982) 353-396.

\bibitem{Weinberger2002-1}
H. F. Weinberger, M. A. Lewis and B. Li,
Analysis of linear determinacy for spread in cooperative models,
J. Math. Biol. 45 (2002), 183-218.

\bibitem{Weinberger2007}
H. F. Weinberger, M. A. Lewis and B. Li,
Anomalous spreading speeds of cooperative recursion systems,
J. Math. Biol. 55 (2007), 207-222.

\bibitem{Wi41}
D. V. Widder,
The Laplace Transform, Princeton University Press, Princeton, N. J., 1941.


\end{thebibliography}
\end{document}